\documentclass[11pt]{article}
\usepackage[a4paper,margin=1.1in]{geometry}
\usepackage{amsmath,amssymb,amsthm,mathtools}
\usepackage{enumitem}
\usepackage{hyperref}
\usepackage{microtype}
\usepackage{mathrsfs}
\usepackage{xcolor}
\usepackage{geometry}
\hypersetup{hidelinks}

\numberwithin{equation}{section}

\newtheorem{theorem}{Theorem}[section]
\newtheorem{proposition}[theorem]{Proposition}
\newtheorem*{proposition*}{Proposition}
\newtheorem{lemma}[theorem]{Lemma}
\newtheorem{assumption}[theorem]{Assumption}
\newtheorem{remark}[theorem]{Remark}
\newtheorem{definition}[theorem]{Definition}

\newcommand{\R}{\mathbb R}
\newcommand{\N}{\mathbb N}
\newcommand{\E}{\mathbb E}
\newcommand{\PP}{\mathbb P}

\newcommand{\1}{\mathbf 1}

\newcommand{\norm}[1]{\left\lVert #1 \right\rVert}
\newcommand{\ip}[2]{\left\langle #1,#2\right\rangle}
\newcommand{\OO}{\mathcal O_0}
\newcommand{\OOt}{\mathcal O_t}
\newcommand{\Ot}{\mathcal O_t}
\newcommand{\pa}{\partial}
\newcommand{\divy}{\operatorname{div}_y}
\newcommand{\varephi}{\varphi}

\title{Well-posedness and regularity of stochastic heat equations on moving domains}
\author{
Chongyang Ren\textsuperscript{1},
Tusheng Zhang\textsuperscript{1,2}
}
\date{}

\begin{document}
\maketitle
\footnotetext[1]{%
School of Mathematics, University of Science and Technology of China,
Hefei, China.
Email: \texttt{rcy.math@ustc.edu.cn} (Chongyang Ren).
}

\footnotetext[2]{%
Department of Mathematics, University of Manchester,
Manchester M13 9PL, United Kingdom.
Email: \texttt{tusheng.zhang@manchester.ac.uk}.
}

\begin{abstract}
In this paper we investigate stochastic heat equations driven by multiplicative noise on moving domains. We establish the well-posedness within a nonhomogeneous variational framework. Furthermore, by combining stochastic De Giorgi iteration with Dirichlet parabolic estimates, we obtain the H\"older regularity of the solutions.
\end{abstract}

\section{Introduction}
In this paper, we are concerned with the well-posedness and regularity of stochastic heat equations driven
by multiplicative noise on a family of time-dependent bounded domains
$
\{\mathcal O_t\}_{0\leq t\leq T}\subset\R^d
$
 which are given as follows,
\begin{equation}
\left\{
\begin{aligned}
&\frac{\partial u}{\partial t}(t,x)
=
\Delta_x u(t,x)+f(t,x,u(t,x))
+ g_i(t,x,u(t,x))\dot w_t^i,
&&x\in\OOt,\ 0<t\leq T,\\
&u(t,x)=0,
&&x\in\partial\OOt,\ 0<t\leq T,\\
&u(0,x)=u_0(x),
&&x\in\OO .
\end{aligned}
\right.
\label{eq:moving-spde}
\end{equation}
Here $\Delta_x u(t,x)$ means the action of the Dirichlet Laplacian on the domain $\mathcal O_t$. The processes $\{w^i\}_{i\in\mathbb N}$ are independent standard Brownian motions on a filtered probability space $(\Omega,\mathcal F,\{\mathcal F_t\}_{t\ge0},\mathbb P)$. The coefficients $f$, $g=(g_i)_{i\in\mathbb N}$ are random. Throughout the paper, we adopt the Einstein summation convention:
repeated indices are summed over the indices.

The study of partial differential equations on moving domains is part of the broader problem of understanding equations on non-static spaces. Such problems arise naturally in fluid mechanics, free-boundary models, biological growth, diffusion in moving media, reaction-diffusion systems on evolving spatial regions, and materials science. Passing from stationary domains to domains that evolve with time often leads to more realistic models; typical examples include surface dissolution of binary alloys, pattern formation and chemotaxis on evolving biological surfaces, cell motility, and elastic membranes. We refer to \cite{BarreiraElliottMadzvamuse2011,EilksElliott2008,ElliottStinnerVenkataraman2012,GarckeLamStinner2014} for some representative applications. 

Random forcing makes the moving-domain problem substantially more delicate. Even when the deterministic motion of the domain is known, the state space changes with time, while the stochastic integral must still be interpreted in a fixed probabilistic framework. For stochastic partial differential equations (SPDEs) on time-dependent domains, \cite{WangZhaiZhang2022} studied stochastic two-dimensional Navier--Stokes equations on time-dependent domains with additive noise, and \cite{PanWangZhaiZhang2025} established well-posedness for stochastic heat equations on one-dimensional moving intervals driven by multiplicative noise. We also note that the authors in \cite{PanWangZhaiZhang2026} formulate a general setting, the so-called nonhomogeneous monotonicity, under which the well-posedness of SPDEs on moving domains can be established.

Regularity of solutions to stochastic parabolic equations is a central issue in the theory of SPDEs. On fixed spatial domains, the linear theory has been developed from several points of view: the $W^{k,p}$-theory has been well developed in \cite{Krylov1996}, and semigroup methods were studied in \cite{DaPratoZabczyk1992,BrzezniakVanNeervenVeraarWeis2008}. Results concerning nonlinear stochastic parabolic equations have also been established in \cite{DebusscheDeMoorHofmanova2015}. In the direction of H\"older regularity, stochastic De Giorgi iteration was introduced in \cite{HsuWangWang2017} to obtain pathwise H\"older continuity under very weak regularity assumptions on the coefficients. 

The purpose of the present paper is to establish well-posedness and H\"older regularity for stochastic heat equations with multiplicative noise on d-dimensional moving domains. Compared with \cite{PanWangZhaiZhang2025}, the present work has two main new features. First, the approach in \cite{PanWangZhaiZhang2025} relies essentially on the evolving eigenbasis of the one-dimensional Dirichlet Laplacian. Consequently, that method is intrinsically restricted to moving intervals and does not extend directly to moving domains in higher dimensions. In contrast, we treat bounded moving domains in $\R^d$ by pulling the equation back to a fixed reference domain and applying the nonhomogeneous variational framework. Second, beyond the well-posedness theory, we prove H\"older regularity for the solution of stochastic heat equations on moving domains using stochastic De Giorgi iteration together with Dirichlet parabolic estimates.

The rest of the paper is organized as follows. Section~\ref{sec:main-result} states the main results  and explains the proof strategy. Section~\ref{sec:variational-framework} proves well-posedness using the nonhomogeneous variational framework developed in \cite{PanWangZhaiZhang2026}. Section~\ref{sec:fixed-regularity} proves the H\"older estimate for solutions of SPDEs on a fixed domain. Section~\ref{sec:moving-regularity} transfers this estimate back to the moving domains and completes the proof of the regularity of the solution.

\section{Main result and proof strategy}
\label{sec:main-result}
In this section, we state the main result and briefly describe the proof strategy.
The moving domains are generated by a deterministic flow. More precisely, there is a map
$$
r:[0,T]\times\OO\to\R^d
$$
such that $r(0,y)=y$ and $r(t,\cdot):\OO\to\OOt$ is a diffeomorphism for each $t$. We use $y\in\OO$ for the reference variable and $x=r(t,y)\in\OOt$ for the space variable. The inverse map is denoted by
\begin{equation*}
\rho(t,x)=r(t,\cdot)^{-1}(x),\qquad x\in\OOt .
\end{equation*}
Thus
\begin{equation*}
\rho(t,r(t,y))=y,\qquad r(t,\rho(t,x))=x .
\end{equation*}
The Jacobian of the flow is

$$
J(t,y)=\left|\det D_y r(t,y)\right|.
$$

The basic assumptions are as follows:
\begin{assumption}
\label{ass:moving-domain}
The maps $r$ and $\rho$ satisfy
\begin{equation*}
\sup_{0\leq t\leq T}
\Big(
\norm{r(t,\cdot)}_{C^3(\overline\OO)}
+\norm{\pa_t r(t,\cdot)}_{C^2(\overline\OO)}
\Big)<\infty .
\end{equation*}
\end{assumption}
\begin{assumption}
\label{ass:coefficients}
There exist constants $L,\Lambda, K_{0}>0$ such that, for every $t\in[0,T]$, $x\in\OOt$, and $z,z'\in\R$, we have
\begin{align}
|f(t,x,&z)-f(t,x,z')|
+\norm{g(t,x,z)-g(t,x,z')}_{\ell^2}
\leq L|z-z'|,
\label{eq:fg-lipschitz}\\
&|f(t,x,z)|+\norm{g(t,x,z)}_{\ell^2}
\leq K_{0}+\Lambda |z|.
\label{eq:moving-growth}
\end{align}
\end{assumption}

\begin{remark}
\begin{itemize}
\item[(i)] Assumption 2.1 also implies that the inverse mapping $\rho$ also satisfies
\begin{equation}\label{00}\sup_{0\leq t\leq T}
\Big(\norm{\rho(t,\cdot)}_{C^3(\overline\OOt)}
+\norm{\pa_t\rho(t,\cdot)}_{C^2(\overline\OOt)}
\Big)<\infty .
\end{equation}

\item[(ii)]
For each fixed $y\in\OO$, the function
$t\mapsto\det D_y r(t,y)$ is continuous and never vanishes, since
$r(t,\cdot)$ is a diffeomorphism. Moreover,
$r(0,y)=y$ implies $\det D_y r(0,y)=1$. Hence, 
\[
\det D_y r(t,y)>0
\qquad
\text{for all }(t,y)\in[0,T]\times\OO,
\]
from which we have
\[
J(t,y)=\left|\det D_y r(t,y)\right|=\det D_y r(t,y).
\]
\end{itemize}
\end{remark}

Apart from Assumptions~\ref{ass:moving-domain} and \ref{ass:coefficients}, we also assume that the reference domain $\OO$ is $C^{1,1}$ and that the initial datum $u_0\in L^2(\OO)$ is deterministic. To state the main result of the paper, we introduce some noations. Let
\begin{equation*}
Q_{T}=\{(t,x):0\le t\le T,\ x\in\OOt\}.
\end{equation*}
Define
\begin{equation}
H_0^1(\OO)
:=
\overline{C_c^\infty(\OO)}^{\,H^1(\OO)}.
\label{eq:H01-definition}
\end{equation}
For a deterministic function $\varephi$ on $Q_{T}$, set
\begin{equation*}
\widehat\varephi(t,y):=\varephi(t,r(t,y)),
\qquad (t,y)\in[0,T]\times\overline\OO,
\end{equation*}
and define the Dirichlet test class
\begin{equation*}
C_D^2(Q_{T})
:=
\left\{
\varephi: \varephi\in C^2([0,T]\times\OO),\quad
\varephi(t,\cdot)|_{\partial\Ot}=0
\text{ for every }t\in[0,T]
\right\}.
\end{equation*}
Under Assumption~\ref{ass:moving-domain}, this definition is
equivalent to the pullback formulation
\[
\widehat\varphi(t,y):=\varphi(t,r(t,y))
\in C^2([0,T]\times \OO),
\qquad
\widehat\varphi(t,\cdot)|_{\partial\OO}=0.
\]
Indeed, the converse relation is
\(
\varphi(t,x)=\widehat\varphi(t,\rho(t,x)).
\)
For a function $u$ defined on $Q_{T}$, let
\[
\widehat{u}(t,y):=u(t,r(t,y)),
\qquad (t,y)\in[0,T]\times\OO.
\]
We define
\begin{align}
C([0,T];L^2(\mathcal O_{\cdot}))
:={}&
\Bigl\{
u=\{u(t)\}_{t\in[0,T]}:
u(t)\in L^2(\mathcal O_t)\ \text{for every }t\in[0,T],
\nonumber\\
&\hspace{35mm}
\widehat{u}(\cdot)\in C([0,T];L^2(\OO))
\Bigr\},
\label{eq:moving-C-space}
\\
L^2(0,T;H_0^1(\mathcal O_{\cdot}))
:={}&
\Bigl\{
u=\{u(t)\}_{t\in[0,T]}:
u(t)\in H_0^1(\mathcal O_t)\ \text{for a.e. }t\in(0,T),
\nonumber\\
&\hspace{35mm}
\widehat{u}(\cdot)\in L^2(0,T;H_0^1(\OO))
\Bigr\}.
\label{eq:moving-L2-H01-space}
\end{align}
Here is the definition of the solution to equation
\eqref{eq:moving-spde}.
\begin{definition}
\label{def:moving-solution}
An adapted random field $u$ on $Q_{T}$ is called a weak solution of equation \eqref{eq:moving-spde} if
\begin{equation}
u\in L^2(\Omega;C([0,T];L^2(\mathcal O_{\cdot})))
\cap
L^2(\Omega;L^2(0,T;H_0^1(\mathcal O_{\cdot}))),
\label{eq:moving-solution-space}
\end{equation}
and
for every deterministic $\varephi\in C_D^2(Q_{T})$ and every
$t\in[0,T]$, the following identity holds almost surely:
\begin{align}
\int_{\OOt}u(t,x)\varephi(t,x)\,dx
&=
\int_{\OO}u_0(x)\varephi(0,x)\,dx
-\int_0^t\int_{\mathcal O_s}\nabla_xu(s,x)\cdot\nabla_x\varephi(s,x)\,dx\,ds
\nonumber\\
&\quad
+\int_0^t\int_{\mathcal O_s}u(s,x)\partial_s\varephi(s,x)\,dx\,ds
+\int_0^t\int_{\mathcal O_s}f(s,x,u(s,x))\varephi(s,x)\,dx\,ds
\nonumber\\
&\quad
+\int_0^t\int_{\mathcal O_s}g_i(s,x,u(s,x))\varephi(s,x)\,dx\,dw_s^i .
\label{eq:moving-weak-form}
\end{align}
\end{definition}

 For $T_0>0$ with $2T_0\leq T$, define the moving cylinder
$$
Q_{T_0,2T_0}
=
\{(t,x):T_0\leq t\leq 2T_0,\ x\in\overline{\OOt}\}.
$$
For a function $v$ on $Q_{T_0,2T_0}$, set
\begin{align*}
\|v\|_{C^{\alpha/2,\alpha}(Q_{T_0,2T_0})}
&:=
\sup_{(t,x)\in Q_{T_0,2T_0}}|v(t,x)|
+
\sup_{\substack{(t,x),(s,z)\in Q_{T_0,2T_0}\\ (t,x)\neq(s,z)}}
\frac{|v(t,x)-v(s,z)|}
{|t-s|^{\alpha/2}+|x-z|^\alpha}.
\end{align*}

\begin{theorem}\label{thm:physical-holder-transfer}
Under Assumptions 2.1 and 2.2.  For each $u_0\in L^2(\OO)$, equation \eqref{eq:moving-spde} has a unique weak solution.  There exists $\alpha\in(0,1)$ such that the weak solution $u$ of \eqref{eq:moving-spde} has a modification whose sample paths are H\"older continuous on $Q_{T_0,2T_0}$. Furthermore, for every $p>0$,
\begin{equation}
\E\|u\|_{C^{\alpha/2,\alpha}(Q_{T_0,2T_0})}^p
\leq
C
\left(
\|u_0\|_{L^2(\OO)}
+
K_{0}
\right)^p .
\label{eq:physical-holder-moment}
\end{equation}
Here $C$ depends on $d$, $L$, $\Lambda$, $p,T_0,T,|\OO|$, and on the uniform bounds in Assumption~\ref{ass:moving-domain}.
\end{theorem}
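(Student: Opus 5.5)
The plan is to pull the problem back to the fixed reference domain $\OO$ and prove both well-posedness and regularity there, then push the results forward. For $\varphi\in C_D^2(Q_T)$ we change variables $x=r(t,y)$, $dx=J(t,y)\,dy$, and use $\nabla_x u = (D_y r)^{-T}\nabla_y\widehat u$ together with the chain rule $\partial_t\widehat\varphi = \partial_t\varphi + \nabla_x\varphi\cdot\partial_t r$. This turns \eqref{eq:moving-weak-form} into a weak formulation on $\OO$ for $v:=J\widehat u$, or, more conveniently, for $\widehat u$ in the weighted space $L^2(\OO;J\,dy)$. The resulting equation has the form
\[
d(J\widehat u)=\Big[\divy\big(A\nabla_y\widehat u\big)+\divy\big(b\,\widehat u\big)+J\widehat f\Big]dt+J\widehat g_i\,dw^i_t .
\]
Here $A=J(D_yr)^{-1}(D_yr)^{-T}$ is uniformly elliptic with $C^2$ coefficients by Assumption~\ref{ass:moving-domain} and Remark (i). The term $b$ is the drift generated by the velocity $\partial_t r\circ\rho$, and the coefficients $\widehat f,\widehat g$ inherit the Lipschitz and growth bounds of Assumption~\ref{ass:coefficients} with the same $L,\Lambda,K_0$. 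The pullback operator is a bijection between the moving spaces \eqref{eq:moving-C-space}, \eqref{eq:moving-L2-H01-space} and $C([0,T];L^2(\OO))$, $L^2(0,T;H^1_0(\OO))$, with norms equivalent up to constants depending only on the bounds in Assumption~\ref{ass:moving-domain}. It therefore suffices to solve and estimate the fixed-domain equation.

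For existence and uniqueness we place this equation in the nonhomogeneous variational framework of \cite{PanWangZhaiZhang2026}, with time-dependent Hilbert space $H_t=L^2(\OO;J(t)dy)$ and $V=H^1_0(\OO)$. We verify four properties.
\begin{itemize}
\item Hemicontinuity of the operator.
\item Weak monotonicity: $2\langle \mathcal A(t,u)-\mathcal A(t,v),u-v\rangle+\|\mathcal B(t,u)-\mathcal B(t,v)\|^2\le C\|u-v\|_{H_t}^2$. This follows from ellipticity of $A$, Young's inequality for the first-order terms coming from $b$ and $\partial_tJ$, and the Lipschitz bound \eqref{eq:fg-lipschitz}.
\item Coercivity, via Gårding's inequality.
\item Linear growth.
\end{itemize}
The time-dependence of the inner product, through $\partial_t J$, is bounded by Assumption~\ref{ass:moving-domain}. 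The framework then yields a unique solution $\widehat u\in L^2(\Omega;C([0,T];L^2))\cap L^2(\Omega;L^2(0,T;H_0^1))$. Mapping back gives the unique weak solution $u$ of \eqref{eq:moving-spde}. Uniqueness in the moving sense transfers because the weak formulations are equivalent under the test-function correspondence $\varphi\leftrightarrow\widehat\varphi$.

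For the Hölder estimate we rewrite the fixed-domain equation in divergence form with bounded measurable lower-order terms and apply the stochastic De Giorgi iteration of \cite{HsuWangWang2017}, in the localized interior and boundary versions. The steps are as follows.
\begin{enumerate}
\item Split $\widehat u=u^1+u^2$. Here $u^2$ solves the linear equation with the noise $J\widehat g_i\,dw^i$ and forcing $J\widehat f$ frozen as given inputs, with zero data. We estimate $u^2$ in $C^{\alpha/2,\alpha}$ by Dirichlet parabolic $L^p$/Hölder estimates for stochastic convolutions (Krylov-type $W^{1,p}$ theory on the $C^{1,1}$ domain $\OO$), combined with Kolmogorov's criterion. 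This requires high moments of $\widehat g(\widehat u)$.
\item Obtain those moments from the growth bound \eqref{eq:moving-growth} and a stochastic De Giorgi $L^\infty$ bound on $[T_0/2,2T_0]$, which gives $\E\sup|\widehat u|^p\le C(\|u_0\|_{L^2}+K_0)^p$.
\item Handle $u^1$, which solves a random deterministic parabolic equation pathwise. By the De Giorgi–Nash–Moser theorem up to the flat-able $C^{1,1}$ boundary, it is Hölder on $[T_0,2T_0]\times\overline{\OO}$ with bound controlled by its $L^2$ norm on the larger cylinder.
\end{enumerate}
The starting time $T_0>0$ is what absorbs the merely $L^2$ initial datum. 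Moments for every $p>0$ follow because the inputs have Gaussian-type moment bounds, and for $p<2$ we use Hölder's inequality.

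Finally we transfer the estimate: $u(t,x)=\widehat u(t,\rho(t,x))$, and $\rho$ is Lipschitz in $(t,x)$ by \eqref{00}. Hence $|\rho(t,x)-\rho(s,z)|\le C(|t-s|+|x-z|)$, and since $|t-s|\le T^{1-\alpha/2}\,|t-s|^{\alpha/2}$ (up to constants), we get $\|u\|_{C^{\alpha/2,\alpha}(Q_{T_0,2T_0})}\le C\|\widehat u\|_{C^{\alpha/2,\alpha}([T_0,2T_0]\times\overline\OO)}$, which gives \eqref{eq:physical-holder-moment}.

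I expect the main obstacle to be the boundary part of the De Giorgi/Hölder step. The boundary is only $C^{1,1}$, and the lower-order drift $b$ produced by the domain motion must be controlled uniformly up to $\partial\OO$ while the stochastic term is handled. I would first try to control the stochastic part entirely through the Dirichlet stochastic-convolution estimate, so that the De Giorgi argument is only ever applied to a pathwise deterministic equation.
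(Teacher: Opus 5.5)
Your proposal is correct and has the same architecture as the paper. You pull back by $r$ and use the nonhomogeneous variational framework with $H_t=L^2(\OO;J\,dy)$. You get stochastic De Giorgi $L^\infty$ moments on $[T_0/2,2T_0]$, split into a Dirichlet stochastic convolution plus a pathwise deterministic equation, and transfer back through the Lipschitz bounds on $r$ and $\rho$.

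Your conservative form, $d(J\widehat u)=[\divy(JA\nabla\widehat u)+\divy(b\widehat u)+\dots]\,dt+\dots$ with $A=K_rK_r^\top$ (your matrix is $JA$) and $b=-J\,\partial_t\rho(t,r(t,y))$, is equivalent to the paper's \eqref{eq:fixed-spde}. Expanding it and using \eqref{eq:jacobian-transport-equivalence} (which gives $\divy b=\partial_tJ$) and \eqref{eq:q-geometric-form} turns it into $\partial_t\widehat u=\divy(A\nabla\widehat u)+q^i\partial_{y_i}\widehat u+\dots$.

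The one genuine difference is where the variable coefficients go in the split.
\begin{itemize}
\item \emph{Your route.} The stochastic part carries the full pulled-back operator together with both $\widehat f(\widehat u)$ and $\widehat g(\widehat u)$. The remainder $u^1$ then solves a homogeneous linear equation, and its boundary H\"older norm is controlled by an $L^2$ norm on the larger cylinder alone. The cost is that you need an $L^p$/H\"older theory for SPDEs with variable coefficients and lower-order terms on a $C^{1,1}$ domain. That theory is available here because the flow coefficients are deterministic and smooth, but it is a heavier input.
\item \emph{The paper's route.} It takes $dv=\Delta_Dv\,dt+g_i(u)\,dw^i$, so only the constant-coefficient estimate of Lemma~\ref{lem:dirichlet-heat} is used. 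It moves $f(u)+b\cdot\nabla v$ and $\operatorname{div}((A-I)\nabla v)$ into the deterministic equation for $\phi=u-v$, which is handled by the inhomogeneous estimate of Lemma~\ref{lem:boundary-dgn}. This is why the paper also needs $\nabla v\in L^p$ with $p>n+2$.
\end{itemize}

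Two smaller points.
\begin{itemize}
\item In either version the stochastic part should start at an intermediate time such as $S=T_0/2$, not at $t=0$. Near $t=0$, $\widehat g(\widehat u)$ is only known to be in $L^2$, and only after the $L^\infty$ bound applies is it in $L^p$ with $p>n+2$. Consequently $u^1(S)=\widehat u(S)$ is random, which is consistent with your pathwise treatment.
\item The boundary De Giorgi step you flag as the main obstacle needs no localization. Since $(\widehat u-k)_+\in H_0^1(\OO)$ for $k\geq0$, the paper runs the stochastic iteration globally on $\OO$ (Proposition~\ref{prop:degiorgi-step}) and absorbs the bounded drift by Young's inequality. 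This stochastic $L^\infty$ bound is still what feeds the required $L^p$ integrability into the convolution estimate. A De Giorgi argument applied only to the pathwise deterministic equation would not by itself supply that integrability.
\end{itemize}
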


\subsection*{Proof strategy}

The proof has three main steps. First, we transform the equation on moving-domain into an equation on a fixed-domain by means of the diffeomorphism $r(t,\cdot)$. The resulting equation has uniformly elliptic time-dependent coefficients and a bounded first-order term induced by the motion of the domain. The well-posedness of the transformed equation will be proved using the nonhomogeneous variational framework of \cite{PanWangZhaiZhang2026}.
Secondly, we prove the H\"older regularity for the solution of the equation on the fixed-domain. This part combines stochastic De Giorgi iteration, following the approach of \cite{HsuWangWang2017}, with Dirichlet boundary estimates. 
Finally, we transfer the H\"older estimate of the solution of the SPDEs on the fixed domain back to SPDEs the original moving domain. 

\section{Well-posedness }
\label{sec:variational-framework}

In this section, we transfer the stochastic heat equations on moving domains to a SPDE on a bounded domain  and prove the equivalence of the two solutions of SPDEs. Then, we establish the well-posedness using a nonhomogeneous variational framework introduced in \cite{PanWangZhaiZhang2026}. To get an idea how the transformed equation looks like, below we carry out some formal calculations. Rigorous arguments will be presented later.
Define
\begin{equation*}
\widehat{u}(t,y)=u(t,r(t,y)),\qquad y\in\OO .
\end{equation*}
Equivalently, $u(t,x)=\widehat{u}(t,\rho(t,x))$. Put
\begin{equation*}
K_r(t,y):=D_x\rho(t,r(t,y)).
\end{equation*}
Differentiating $\rho(t,r(t,y))=y$ with respect to $y$ gives
\begin{equation*}
D_x\rho(t,r(t,y))D_y r(t,y)=I_d,
\end{equation*}
and hence
\begin{equation}
K_r(t,y)=D_x\rho(t,r(t,y))=(D_y r(t,y))^{-1}.
\label{eq:K-inverse}
\end{equation}
Define
\begin{equation*}
a^{ij}(t,y)
=
\sum_{\ell=1}^d
\pa_{x_\ell}\rho_i(t,r(t,y))\pa_{x_\ell}\rho_j(t,r(t,y)),
\qquad A(t,y)=(a^{ij}(t,y)).
\end{equation*}
Then $A=K_rK_r^\top$, and Assumption~\ref{ass:moving-domain} implies that $A$ is uniformly elliptic.

The identities behind the pullback are as follows:
\begin{equation*}
\pa_{x_\ell}u(t,r(t,y))
=
\pa_{y_i}\widehat{u}(t,y)\pa_{x_\ell}\rho_i(t,r(t,y)),
\end{equation*}
and
\begin{equation*}
\Delta_xu(t,r(t,y))
=
a^{ij}(t,y)\pa_{y_i y_j}\widehat{u}(t,y)
+\Delta_x\rho_i(t,r(t,y))\pa_{y_i}\widehat{u}(t,y).
\end{equation*}
Moreover,
\begin{equation*}
\frac{\partial \widehat{u}}{\partial t}(t,y)
=
\frac{\partial u}{\partial t}(t,r(t,y))
+\nabla_xu(t,r(t,y))\cdot\partial_t r(t,y).
\end{equation*}
Differentiating $\rho(t,r(t,y))=y$ in time yields
\begin{equation*}
\pa_t\rho_i(t,r(t,y))
+
\pa_{x_\ell}\rho_i(t,r(t,y))\pa_t r_\ell(t,y)=0,
\end{equation*}
and therefore
\begin{equation*}
\nabla_xu(t,r(t,y))\cdot\pa_t r(t,y)
=
-\pa_t\rho_i(t,r(t,y))\pa_{y_i}\widehat{u}(t,y), \ \ u(t,x)=\widehat{u}(t,\rho(t,x)).
\end{equation*}
Consequently if $u$ is a solution to equation \eqref{eq:moving-spde}, $\widehat{u}$ formally satisfies the following stochastic partial differential equation on the fixed domain $\OO$:
\begin{align*}
\frac{\partial \widehat{u}}{\partial t}
&=
a^{ij}\pa_{y_i y_j}\widehat{u}
+
\Big[
\Delta_x\rho_i(t,r(t,y))-\pa_t\rho_i(t,r(t,y))
\Big]\pa_{y_i}\widehat{u}
+\widetilde f(t,y,\widehat{u})
+\widetilde g_i(t,y,\widehat{u})\dot w_t^i,
\end{align*}
where
\begin{equation}
\widetilde f(t,y,z)=f(t,r(t,y),z),
\qquad
\widetilde g_i(t,y,z)=g_i(t,r(t,y),z).
\label{eq:fg-pullback}
\end{equation}
Since
\begin{equation*}
a^{ij}\pa_{y_i y_j}\widehat{u}
=
\pa_{y_j}\big(a^{ij}\pa_{y_i}\widehat{u}\big)
-\pa_{y_j}a^{ij}\pa_{y_i}\widehat{u},
\end{equation*}
the same equation may be written in divergence form as
\begin{equation}
\left\{
\begin{aligned}
&\frac{\partial \widehat{u}}{\partial t}
=
\divy(A(t,y)\nabla_y \widehat{u})
+q^i(t,y)\pa_{y_i}\widehat{u}
+\widetilde f(t,y,\widehat{u})
+\widetilde g_i(t,y,\widehat{u})\dot w_t^i,
&&y\in\OO,\ 0<t\leq T,\\
&\widehat{u}(t,y)=0,
&&y\in\partial\OO,\ 0<t\leq T,\\
&\widehat{u}(0,y)=u_0(y),
&&y\in\OO ,
\end{aligned}
\right.
\label{eq:fixed-spde}
\end{equation}
where
\begin{equation}
q^i(t,y)
=
\Delta_x\rho_i(t,r(t,y))
-\pa_t\rho_i(t,r(t,y))
-\pa_{y_j}a^{ij}(t,y).
\label{eq:q-def}
\end{equation}
\vskip 0.4cm
 
Next we will consider the equation (\ref{eq:fixed-spde}) in a nonhomogeneous variational framework. 
Set
\begin{equation*}
H=L^2(\OO),\qquad V=H_0^1(\OO).
\end{equation*}
For $v,w\in H$, define
\begin{equation}
(v,w)_t=\int_{\OO}v(y)w(y)J(t,y)\,dy,
\qquad |v|_t^2=(v,v)_t .
\label{eq:time-inner-product}
\end{equation}
where
$
 J(t,y)=\det D_y r(t,y).
$
Let $H_t$ denote the vector space $H$ endowed with the inner product $(\cdot,\cdot)_t$.
The structural operator $\iota_t^*:H\to H$ and its inverse are given by
\begin{equation}
\iota_t^*v=J(t,\cdot)v,
\qquad
\iota_{-t}^*v=J(t,\cdot)^{-1}v,
\qquad
(v,w)_t=(\iota_t^*v,w)_0.
\label{eq:structural-operators}
\end{equation}

Let $W$ be the $\ell^2$-cylindrical Wiener process generated by the Brownian motions $\{w^i\}_{i\in\N}$. Thus, if $(e_i)_{i\in\N}$ denotes the canonical orthonormal basis of $\ell^2$, then formally
\begin{equation*}
W_t=\sum_{i=1}^{\infty}e_i w_t^i .
\end{equation*}

For $v,\psi\in V$, define the drift $\mathcal A(t,v)\in V^*$ by
\begin{align}
{}_{V^*}\langle \mathcal A(t,v),\psi\rangle_V
&=
-\int_{\OO}A(t,y)\nabla v(y)\cdot\nabla\psi(y)\,dy
\nonumber\\
&\quad
+\int_{\OO}q^i(t,y)\partial_{y_i}v(y)\psi(y)\,dy
+\int_{\OO}\widetilde f(t,y,v(y))\psi(y)\,dy.
\label{eq:A-operator}
\end{align}
The diffusion operator $\mathcal B(t,v)\in L_2(\ell^2,H)$ is defined by
\begin{equation}
[\mathcal B(t,v)h](y)
=
\sum_{i=1}^{\infty}\widetilde g_i(t,y,v(y))h_i,
\qquad h=(h_i)_{i\in\N}\in\ell^2,
\label{eq:B-operator}
\end{equation}
or, equivalently,
\begin{equation*}
(\mathcal B(t,v)e_i,\psi)_0
=
\int_{\OO}\widetilde g_i(t,y,v(y))\psi(y)\,dy.
\end{equation*}

Consider the following evolution equation in Gelfand triplet $V\subset H\subset V^*$: 

\begin{equation}
dY(t)=\mathcal A(t,Y(t))\,dt+\mathcal B(t,Y(t))\,dW_t,
\qquad Y(0)=u_0.
\label{eq:abstract-fixed-equation}
\end{equation}

\begin{definition}
\label{def:fixed-variational-solution}

An adapted process $Y$ is called a variational solution of
\eqref{eq:abstract-fixed-equation} if
\begin{equation*}
Y\in L^2(\Omega;C([0,T];H))\cap L^2(\Omega;L^2(0,T;V)),
\end{equation*}
and, for every deterministic $\phi\in H^1([0,T];V)$ and every
$t\in[0,T]$, the following identity holds almost surely:
\begin{align}
(Y(t),\phi(t))_0
&=
(u_0,\phi(0))_0
+\int_0^t{}_{V^*}\langle \mathcal A(s,Y(s)),\phi(s)\rangle_V\,ds
\nonumber\\
&\quad
+\int_0^t(\mathcal B(s,Y(s))\,dW_s,\phi(s))_0
+\int_0^t(Y(s),\dot\phi(s))_0\,ds.
\label{eq:fixed-variational-form}
\end{align}
Here $\dot\phi$ denotes the weak time derivative of $\phi$.

\end{definition}

Now we can state the main result of Section \ref{sec:variational-framework}.
\begin{theorem}
\label{thm:moving-wellposed}
Under Assumptions~\ref{ass:moving-domain} and \ref{ass:coefficients}, and for every $u_0\in L^2(\Omega,\mathcal F_0;L^2(\OO))$, the equation \eqref{eq:moving-spde} has a unique weak solution in the sense of Definition~\ref{def:moving-solution}. 
\end{theorem}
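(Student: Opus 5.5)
The proof has two steps. First we show that weak solutions of \eqref{eq:moving-spde} correspond one-to-one with variational solutions of \eqref{eq:abstract-fixed-equation} via $u\mapsto\widehat u$. Then we verify the hypotheses of the nonhomogeneous monotonicity framework of \cite{PanWangZhaiZhang2026} for $(\mathcal A,\mathcal B)$ with the time-dependent inner products $(\cdot,\cdot)_t$.

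\emph{Equivalence of solution concepts.} Take $\varphi\in C_D^2(Q_T)$ and set $\widehat\varphi(t,y)=\varphi(t,r(t,y))$. We change variables $x=r(s,y)$, $dx=J(s,y)\,dy$, in every term of \eqref{eq:moving-weak-form}. Then $\int_{\mathcal O_t}u\varphi\,dx=(\widehat u(t),\widehat\varphi(t))_t$, and
\[
\int_{\mathcal O_s}\nabla_xu\cdot\nabla_x\varphi\,dx=\int_{\OO}A\nabla\widehat u\cdot\nabla\widehat\varphi\,J\,dy .
\]
We also have $\partial_s\varphi(s,r(s,y))=\partial_s\widehat\varphi-\nabla_x\varphi\cdot\partial_s r$. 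The chain-rule term $\nabla_x\varphi\cdot\partial_s r$ is not integrated by parts at this stage; it is recombined below with the derivative of $J$.

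To pass to the identity \eqref{eq:fixed-variational-form} with the $(\cdot,\cdot)_0$ pairing, we use the test function $\phi=J\widehat\varphi$, which lies in $H^1([0,T];V)$ by Assumption~\ref{ass:moving-domain}. We then check that
\[
-\int A\nabla\widehat u\cdot\nabla(\widehat\varphi)J\,dy+\int \widehat u\,\widehat\varphi\,\partial_tJ\,dy-\int\widehat u\,(\nabla_x\varphi\circ r)\cdot\partial_t r\,J\,dy
\]
equals ${}_{V^*}\langle\mathcal A(s,\widehat u),J\widehat\varphi\rangle$ minus its $\widetilde f$-part. This uses the Piola identity $\operatorname{div}_y(J K_r^{\top}e_\ell)=0$ and the Liouville/Reynolds formula
\[
\partial_tJ=J\,(\operatorname{div}_x\partial_t r\circ\rho)\circ r .
\]
Together these produce exactly the lower-order coefficient $q^i$ from \eqref{eq:q-def}.

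Since $\varphi\mapsto J\widehat\varphi$ is a bijection between $C_D^2$ test functions and a class dense in $H^1([0,T];V)$, and all terms are continuous in $\phi$ with respect to that norm, the two weak formulations are equivalent. The function spaces in \eqref{eq:moving-solution-space} match the variational ones by the very definitions \eqref{eq:moving-C-space}–\eqref{eq:moving-L2-H01-space}.

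\emph{Verification of the framework hypotheses.} We check, uniformly in $t$:
\begin{itemize}
\item \emph{Hemicontinuity.} Since $\widetilde f$ is Lipschitz, $\mathcal A$ is hemicontinuous.
\item \emph{Weak monotonicity in the $J$-weighted pairing.} Using uniform ellipticity of $A$ with $\lambda|\xi|^2\le A\xi\cdot\xi$, boundedness of $q$ and $J$, and Young's inequality, we obtain
\[
2\,{}_{V^*}\langle\mathcal A(t,v)-\mathcal A(t,w),\iota_t^*(v-w)\rangle+\|\mathcal B(t,v)-\mathcal B(t,w)\|_{L_2(\ell^2,H_t)}^2\le C|v-w|_t^2 .
\]
The term involving $\nabla J$ generated by $\iota_t^*$ is absorbed because $J\in C^2$ and $J\ge c>0$.
\item \emph{Coercivity.} With $C$ depending on $K_0$, we have
\[
2\langle\mathcal A(t,v),\iota_t^*v\rangle+\|\mathcal B\|^2\le C(1+|v|_t^2)-\tfrac{\lambda}{2}\|v\|_V^2 .
\]
\item \emph{Growth.} By \eqref{eq:moving-growth}, $\|\mathcal A(t,v)\|_{V^*}\le C(1+\|v\|_V)$.
\item \emph{Regularity of the metric.} The derivative $\partial_t(v,w)_t=(\partial_tJ\,v,w)_0$ is bounded by $C|v|_t|w|_t$.
\end{itemize}
The abstract theorem of \cite{PanWangZhaiZhang2026} then yields a unique variational solution $Y$. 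Setting $u(t,x)=Y(t,\rho(t,x))$ and applying the equivalence above gives existence and uniqueness of the weak solution for \eqref{eq:moving-spde}.

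The main obstacle is the algebraic identification in the equivalence step. One must carefully match the time derivative of $J$, the transport term $\nabla_x\varphi\cdot\partial_tr$, and the divergence of $JK_rK_r^\top$ with the coefficient $q^i$ and the chosen pairing; if the identity fails, the definition of $\mathcal A$ would need a compensating adjustment. I would first verify it on smooth $\widehat u$ via the formal computation preceding \eqref{eq:fixed-spde}, and then extend by density.
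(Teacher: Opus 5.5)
Your route is the paper's: pull back by $r$, and pass between the two weak formulations via $\phi=J\widehat\varphi$. Then verify the structural conditions (C1)--(C4) and hypotheses (H1)--(H5) of the Pan--Wang--Zhai--Zhang framework, apply its Theorem~2.8, and transfer existence and uniqueness back through the bijection $u\leftrightarrow\widehat u$. You absorb the $\nabla J$ terms by Young's inequality in the monotonicity and coercivity bounds, where the paper uses the exact cancellation from \eqref{eq:q-geometric-form}; this is a harmless variant. The conditions you leave unlisted (C1, C3, C4, and the growth bound H5 for $\mathcal B$) follow at once from $0<m\le J\le M$, $\sup_t\|\nabla_yJ(t,\cdot)\|_{L^\infty(\OO)}<\infty$ and \eqref{eq:moving-growth}.

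What needs fixing is the identity you single out as the crux: as displayed it is false, because of two slips. First, $\dot\phi=J\,\partial_t\widehat\varphi+\widehat\varphi\,\partial_tJ$ gives $\int_{\OO}\widehat u\,J\,\partial_t\widehat\varphi\,dy=(\widehat u,\dot\phi)_0-\int_{\OO}\widehat u\,\widehat\varphi\,\partial_tJ\,dy$. So the $\partial_tJ$ term must carry a minus sign; with your plus sign the two sides differ by $2\int_{\OO}\widehat u\,\widehat\varphi\,\partial_tJ\,dy$. Second, the Piola identity reads $\partial_{y_i}\bigl(J\,(K_r)_{i\ell}\bigr)=0$, i.e.\ $\operatorname{div}_y(JK_re_\ell)=0$. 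With $K_r^\top$ in place of $K_r$ it fails in general: for $r(y)=(y_1,y_2+y_1^2/2)$ one has $J=1$ but $\operatorname{div}_y(K_r^\top e_2)=-1$.

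After these corrections the identity
\[
-\int_{\OO}JA\nabla\widehat u\cdot\nabla\widehat\varphi\,dy
-\int_{\OO}\widehat u\,\widehat\varphi\,\partial_tJ\,dy
-\int_{\OO}J\,\widehat u\,(\nabla_x\varphi\circ r)\cdot\partial_tr\,dy
={}_{V^*}\langle\mathcal A(s,\widehat u),J\widehat\varphi\rangle_V
-\int_{\OO}\widetilde f(s,y,\widehat u)\,J\widehat\varphi\,dy
\]
holds exactly for the given $q^i$, so no adjustment of $\mathcal A$ is needed. The corrected Piola identity gives $J\,\Delta_x\rho_i\circ r=\partial_{y_j}(Ja^{ij})$. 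Together with your Liouville formula it also gives $\partial_tJ=-\partial_{y_i}(J\,\partial_t\rho_i\circ r)$. Hence $Jq^i=a^{ij}\partial_{y_j}J-J\,\partial_t\rho_i\circ r$, which is the content of Lemma~\ref{lem:weak-flow-geometric-identities}.

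The $a^{ij}\partial_{y_j}J$ part cancels the term $-\int_{\OO}\widehat\varphi\,A\nabla\widehat u\cdot\nabla J\,dy$ coming from $\nabla(J\widehat\varphi)$. Integrating $-\int_{\OO}J\,\partial_t\rho_i\circ r\,\partial_{y_i}\widehat u\,\widehat\varphi\,dy$ by parts, and using $\partial_t\rho_i\circ r\,\partial_{y_i}\widehat\varphi=-(\nabla_x\varphi\circ r)\cdot\partial_tr$, yields the remaining two terms.
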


\begin{proof}
We will invoke Theorem 2.8 from \cite{PanWangZhaiZhang2026}  in a nonhomogeneous variational framework. The proof proceeds in three steps, presented in the subsections below. First, in Section~\ref{sec:solution-equivalence}, we establish the equivalence between weak solutions of \eqref{eq:moving-spde} and variational solutions of \eqref{eq:abstract-fixed-equation}. Next, in Section~\ref{Verification of C1--C4}, we verify the conditions C1--C4 required in in \cite{PanWangZhaiZhang2026} for the associated time-dependent Hilbert-space structure. In Section~\ref{Verification of H1--H5}, we verify the variational hypotheses H1--H5 for the drift and diffusion operators required in \cite{PanWangZhaiZhang2026}. Theorem 2.8 in \cite{PanWangZhaiZhang2026} then yields the existence and uniqueness of a variational solution to equation \eqref{eq:abstract-fixed-equation}, and the equivalence established in Section~\ref{sec:solution-equivalence} transfers this result to equation \eqref{eq:moving-spde}.
\end{proof}
\subsection{Equivalence of the two notions of solution}
\label{sec:solution-equivalence}

In this subsection, we prove that the two solution concepts introduced in
Definitions~\ref{def:moving-solution} and
\ref{def:fixed-variational-solution} are equivalent under the
change of variables induced by the flow $r$. More precisely, we show that
the pullback transformation
\begin{equation}
Y(t,y)=u(t,r(t,y)),
\qquad (t,y)\in[0,T]\times\OO,
\label{eq:solution-correspondence}
\end{equation}
maps every weak solution $u$ of the problem
\eqref{eq:moving-spde} on moving-domain to a variational solution $Y$ of the problem \eqref{eq:abstract-fixed-equation} on fixed-domain. Conversely, using the inverse
flow $\rho$, we define
\begin{equation}
u(t,x)=Y(t,\rho(t,x)),
\qquad (t,x)\in[0,T]\times\overline{\mathcal O_t},
\label{eq:inverse-solution-correspondence}
\end{equation}
and prove that every variational solution $Y$ gives rise to a weak solution
$u$ on the moving domains. 

Recall $q^i$ was defined in \eqref{eq:q-def}. The following lemma establishes the properties of $q^i$ needed for the analysis of the transformed equation.
\begin{lemma}\label{lem:weak-flow-geometric-identities}
Fix $t\in[0,T]$. The following identities hold in the sense of
distributions on $\OO$:
\begin{align}
&J(t,y)\Delta_x\rho_i(t,r(t,y))
=
\partial_{y_j}\bigl(J(t,y)a^{ij}(t,y)\bigr),
\label{eq:rho-laplacian-coordinate-identity}\\
&\partial_tJ(t,y)
=-
\partial_{y_i}\Bigl(
J(t,y)\partial_t\rho_i(t,r(t,y))
\Bigr),
\label{eq:jacobian-transport-equivalence}\\
&J(t,y)q^i(t,y)
=
a^{ij}(t,y)\partial_{y_j}J(t,y)
-
J(t,y)\partial_t\rho_i(t,r(t,y)).
\label{eq:q-geometric-form}
\end{align}
\end{lemma}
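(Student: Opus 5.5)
Since $r(t,\cdot)\in C^3$ and $\partial_t r(t,\cdot)\in C^2$ by Assumption~\ref{ass:moving-domain}, and $\rho$ has the same regularity by \eqref{00}, all quantities involved ($J$, $a^{ij}$, $\Delta_x\rho_i\circ r$, $\partial_t\rho_i\circ r$) are at least $C^1$ in $y$. So I would prove the three identities pointwise as classical identities, which then hold in particular in the sense of distributions. The common tool is the change of variables $x=r(t,y)$, $dx=J\,dy$, applied to integrals against a test function $\psi\in C_c^\infty(\OO)$. Writing $\varphi=\psi\circ\rho(t,\cdot)\in C_c^1(\OOt)$, we have $\partial_{y_i}\psi(y)=\partial_{x_\ell}\varphi(r)\,\partial_{y_i}r_\ell$, or equivalently $\partial_{x_\ell}\varphi(x)=\partial_{y_i}\psi(\rho)\,\partial_{x_\ell}\rho_i$.

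For \eqref{eq:rho-laplacian-coordinate-identity} I would integrate by parts on $\OOt$:
\[
\int_{\OOt}\Delta_x\rho_i\,\varphi\,dx=-\int_{\OOt}\partial_{x_\ell}\rho_i\,\partial_{x_\ell}\varphi\,dx=-\int_{\OOt}\partial_{x_\ell}\rho_i\,\partial_{x_\ell}\rho_j\,\partial_{y_j}\psi(\rho)\,dx .
\]
Pulling back with $dx=J\,dy$ turns the left side into $\int_{\OO} J\,(\Delta_x\rho_i\circ r)\psi\,dy$ and the right side into $-\int_{\OO} J a^{ij}\partial_{y_j}\psi\,dy$. This is exactly the distributional statement $J\,\Delta_x\rho_i\circ r=\partial_{y_j}(Ja^{ij})$.

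For \eqref{eq:jacobian-transport-equivalence} I would differentiate in $t$ the identity $\int_{\OOt}\varphi(x)\,dx=\int_{\OO}\psi J\,dy$, where $\varphi(t,x)=\psi(\rho(t,x))$ now depends on $t$. This gives
\[
\int_{\OO}\psi\,\partial_tJ\,dy=\frac{d}{dt}\int_{\OO}\psi J\,dy=\frac{d}{dt}\int_{\OOt}\psi(\rho(t,x))\,dx .
\]
The right side can be handled by a Reynolds transport argument. Since $\psi\circ\rho$ vanishes near $\partial\OOt$, the boundary-motion term disappears, leaving $\int_{\OOt}\partial_{y_i}\psi(\rho)\,\partial_t\rho_i\,dx=\int_{\OO}J\,\partial_{y_i}\psi\,(\partial_t\rho_i\circ r)\,dy$. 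That yields \eqref{eq:jacobian-transport-equivalence}.

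Justifying this transport step is the main obstacle. To avoid moving-boundary issues, I would instead verify \eqref{eq:jacobian-transport-equivalence} directly via Jacobi's formula, $\partial_tJ=J\,\mathrm{tr}(K_r\,\partial_tD_yr)=J\,(\operatorname{div}_x v)\circ r$, where $v(t,x)=\partial_tr(t,\rho(t,x))$. Together with $\partial_t\rho\circ r=-K_r\,\partial_tr$, the Piola identity $\partial_{y_i}(J\,(K_r)_{i\ell})=0$, and the chain rule, this gives $-\partial_{y_i}(J\,\partial_t\rho_i\circ r)=\partial_{y_i}(J(K_r)_{i\ell}\,\partial_tr_\ell)=J(K_r)_{i\ell}\,\partial_{y_i}\partial_tr_\ell$, which equals the trace expression. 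The Piola identity itself follows from the distributional identity $\int_{\OO}J(K_r)_{i\ell}\,\partial_{y_i}\psi\,dy=\int_{\OOt}\partial_{x_\ell}\varphi\,dx=0$.

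Finally, \eqref{eq:q-geometric-form} is algebra. Multiplying \eqref{eq:q-def} by $J$ gives $Jq^i=J\,\Delta_x\rho_i\circ r-J\,\partial_t\rho_i\circ r-J\,\partial_{y_j}a^{ij}$. Substituting \eqref{eq:rho-laplacian-coordinate-identity} in the form $J\,\Delta_x\rho_i\circ r=\partial_{y_j}(Ja^{ij})=a^{ij}\partial_{y_j}J+J\,\partial_{y_j}a^{ij}$ cancels the last term, which is \eqref{eq:q-geometric-form}. Since everything involved is $C^1$, the product rule is legitimate.
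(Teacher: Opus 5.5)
Your proposal is correct and follows essentially the same route as the paper. For the first identity you use change of variables and integration by parts against $\psi\circ\rho(t,\cdot)$; for the second, Jacobi's formula together with $\partial_t\rho\circ r=-K_r\,\partial_t r$ and the cofactor (Piola) divergence identity, which is also where you rightly land after setting aside the Reynolds-transport sketch; for the third, the product rule applied to $J$ times the definition of $q^i$. The differences are minor: you argue pointwise using the $C^1$ regularity of all coefficients and derive the Piola identity by the same change-of-variables trick, whereas the paper works against test functions and simply quotes the cofactor identity.
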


\begin{proof}
For any $\zeta\in C_c^\infty(\OO)$. By the change of variables, we have
\begin{align*}
\int_{\OO}
J(t,y)\Delta_x\rho_i(t,r(t,y))\zeta(y)\,dy
&=
\int_{\mathcal O_t}
\Delta_x\rho_i(t,x)\zeta(\rho(t,x))\,dx\\
&=
-\int_{\mathcal O_t}
\partial_{x_\ell}\rho_i(t,x)
\partial_{x_\ell}\!\left[\zeta(\rho(t,x))\right]\,dx\\
&=
-\int_{\mathcal O_t}
\partial_{x_\ell}\rho_i(t,x)
\partial_{x_\ell}\rho_j(t,x)
\partial_{y_j}\zeta(\rho(t,x))\,dx\\
&=
-\int_{\OO}
J(t,y)a^{ij}(t,y)\partial_{y_j}\zeta(y)\,dy.
\end{align*}
This proves \eqref{eq:rho-laplacian-coordinate-identity} in
$\mathcal D'(\OO)$.

Define
$
K(t,y):=(D_y r(t,y))^{-1}.
$
Differentiating
$
\rho(t,r(t,y))=y
$
with respect to $t$ gives 
\begin{equation}
\partial_t\rho_i(t,r(t,y))
=
-K_{ik}(t,y)\partial_t r_k(t,y).
\label{eq:inverse-flow-time-identity}
\end{equation}
Using \eqref{eq:inverse-flow-time-identity} together with the fact that $\partial_tJ(t,y)
=
J(t,y)K_{ik}(t,y)
\partial_{y_i}\partial_t r_k(t,y)$, we obtain
\begin{align}
&\int_{\OO}\partial_tJ(t,y)\xi(y)\,dy
-
\int_{\OO}
J(t,y)\partial_t\rho_i(t,r(t,y))
\partial_{y_i}\xi(y)\,dy
\nonumber\\
=&\int_{\OO}J(t,y)K_{ik}(t,y)
\partial_{y_i}\partial_t r_k(t,y)\xi(y)\,dy+\int_{\OO}
J(t,y)K_{ik}(t,y)\partial_t r_k(t,y)
\partial_{y_i}\xi(y)\,dy\nonumber\\
=&
\int_{\OO}
J(t,y)K_{ik}(t,y)
\partial_{y_i}\Bigl(
\partial_t r_k(t,y)\xi(y)
\Bigr)\,dy.
\label{eq:transport-weak-step-three}
\end{align}
Since
$
J(t,y)K_{ik}(t,y)
$
is the corresponding entry of the cofactor matrix of $D_y r(t,y)$,
the cofactor-divergence identity gives
\begin{equation}
\partial_{y_i}\bigl(J(t,y)K_{ik}(t,y)\bigr)=0.
\label{eq:cofactor-divergence-identity}
\end{equation}
Therefore, integration by parts in
\eqref{eq:transport-weak-step-three} yields
\begin{align*}
&\int_{\OO}\partial_tJ(t,y)\xi(y)\,dy
-
\int_{\OO}
J(t,y)\partial_t\rho_i(t,r(t,y))
\partial_{y_i}\xi(y)\,dy
=0.
\end{align*}
This is precisely
\eqref{eq:jacobian-transport-equivalence}.

Finally, we prove \eqref{eq:q-geometric-form}. By
\eqref{eq:q-def}, for every $\xi\in C_c^\infty(\OO)$,
\begin{align*}
\int_{\OO}J(t,y)q^i(t,y)\xi(y)\,dy
=&
\int_{\OO}
J(t,y)\Delta_x\rho_i(t,r(t,y))\xi(y)\,dy
-\int_{\OO}
J(t,y)\partial_t\rho_i(t,r(t,y))\xi(y)\,dy\\
&-\int_{\OO}
J(t,y)\partial_{y_j}a^{ij}(t,y)\xi(y)\,dy.
\end{align*}
Using \eqref{eq:rho-laplacian-coordinate-identity}, the first term on
the right-hand side satisfies
\begin{align*}
&\int_{\OO}
J(t,y)\Delta_x\rho_i(t,r(t,y))\xi(y)\,dy
=
\left\langle
\partial_{y_j}\bigl(J(t,\cdot)a^{ij}(t,\cdot)\bigr),
\xi
\right\rangle.
\end{align*}
Combining the equations above yields 
\eqref{eq:q-geometric-form}.
\end{proof}

We now describe the corresponding transformation for test functions. Let
$\varphi$ be a deterministic test function on the moving cylinder and define
its pullback by
\begin{equation}
\widehat{\varphi}(t,y):=\varphi(t,r(t,y)).
\label{eq:test-pullback}
\end{equation}
Applying the chain rule to $\varphi$ gives
\begin{align}
\nabla_x\varphi(t,r(t,y))
 &=K_r(t,y)^\top\nabla_y\widehat{\varphi}(t,y),
\label{eq:test-gradient-transform}\\
\partial_t\varphi(t,r(t,y))
 &=\partial_t\widehat{\varphi}(t,y)
 +\partial_t\rho_i(t,r(t,y))\partial_{y_i}\widehat{\varphi}(t,y).
\label{eq:test-time-transform}
\end{align}

For the pulled-back solution $Y(t,y)=u(t,r(t,y))$ and the pulled-back test
function $\widehat{\varphi}(t,y)=\varphi(t,r(t,y))$, the spatial bilinear
forms are  transformed as
\begin{equation}
\int_{\mathcal O_t}\nabla_xu(t,x)\cdot\nabla_x\varphi(t,x)\,dx
=\int_{\OO}J(t,y)A(t,y)\nabla_yY(t,y)\cdot
\nabla_y\widehat{\varphi}(t,y)\,dy.
\label{eq:weak-gradient-change}
\end{equation}
This identity is first verified for smooth $u$ and $\varphi$ and then
extended to $H_0^1$ by density.

Below is the  equivalence of the two notions of solutions.
\begin{proposition}
\label{prop:solution-equivalence}
Under Assumptions 2.1 and 2.2, a process $u$ is a weak solution of the equation \eqref{eq:moving-spde} in the sense of Definition~\ref{def:moving-solution} if and only if its pullback transformation
$
Y(t,y)=u(t,r(t,y))
$
is a variational solution of the equation \eqref{eq:abstract-fixed-equation}. 
\end{proposition}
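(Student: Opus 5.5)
The plan is to compare the two weak formulations term by term under the change of variables $x=r(t,y)$, $dx=J(t,y)\,dy$. The key choice is the correspondence of test functions. Given $\varphi\in C_D^2(Q_T)$ with pullback $\widehat\varphi$, I will test the fixed-domain equation not with $\widehat\varphi$ but with
\[
\phi(t,y):=J(t,y)\widehat\varphi(t,y),
\]
because the moving weak form carries the weight $J$ after the change of variables. Conversely, given $\phi$, I set $\widehat\varphi=\phi/J$. By Assumption~\ref{ass:moving-domain}, $J\in C^1$ in $t$ and $C^2$ in $y$, and $J$ is bounded above and below. Hence multiplication by $J^{\pm1}$ maps $H^1([0,T];V)$ into itself. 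The function spaces in \eqref{eq:moving-solution-space} are, by definition \eqref{eq:moving-C-space}--\eqref{eq:moving-L2-H01-space}, exactly the pullback spaces, so the regularity requirements correspond trivially. The measurability and adaptedness of $u$ and $Y$ are equivalent as well.

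\textbf{Step 1: the deterministic terms.} I transform each term of \eqref{eq:moving-weak-form}.
\begin{itemize}
\item The left side becomes $(Y(t),J\widehat\varphi(t))_0=(Y(t),\phi(t))_0$, and the initial term becomes $(u_0,\phi(0))_0$ since $J(0,\cdot)=1$.
\item The nonlinear terms become $\int \widetilde f\,\phi$ and $\int\widetilde g_i\,\phi\,dw^i$, matching \eqref{eq:A-operator} and \eqref{eq:B-operator}. For the stochastic integral, I use the deterministic change of variables inside the integrand together with the $\ell^2$-bound from \eqref{eq:moving-growth} to justify it.
\item The gradient term becomes $-\int JA\nabla Y\cdot\nabla\widehat\varphi$ by \eqref{eq:weak-gradient-change}.
\item The time term becomes, via \eqref{eq:test-time-transform},
\[
\int JY\partial_t\widehat\varphi+\int JY\,\partial_t\rho_i\,\partial_{y_i}\widehat\varphi .
\]
\end{itemize}
On the fixed side I expand $\phi=J\widehat\varphi$. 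This gives
\[
-\int A\nabla Y\cdot\nabla(J\widehat\varphi)=-\int JA\nabla Y\cdot\nabla\widehat\varphi-\int \widehat\varphi\,A\nabla Y\cdot\nabla J,
\]
and
\[
\int Y\dot\phi=\int JY\partial_t\widehat\varphi+\int Y\widehat\varphi\,\partial_tJ .
\]

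\textbf{Step 2: matching the first-order terms with Lemma~\ref{lem:weak-flow-geometric-identities}.} The difference between the two formulations reduces to
\[
\int q^i\partial_{y_i}Y\,J\widehat\varphi-\int\widehat\varphi\, a^{ij}\partial_{y_j}J\,\partial_{y_i}Y+\int Y\widehat\varphi\,\partial_tJ-\int JY\partial_t\rho_i\partial_{y_i}\widehat\varphi ,
\]
which must vanish.
\begin{itemize}
\item By \eqref{eq:q-geometric-form}, the first two integrals combine to $-\int J\partial_t\rho_i\,\partial_{y_i}Y\,\widehat\varphi$.
\item By \eqref{eq:jacobian-transport-equivalence} tested against $Y\widehat\varphi\in W^{1,1}_0$, we get $\int \partial_tJ\,Y\widehat\varphi=\int J\partial_t\rho_i\,\partial_{y_i}(Y\widehat\varphi)$.
\item Expanding the product rule in the last identity cancels the remaining two terms exactly.
\end{itemize}
Density of $C_c^\infty$ in $H_0^1$ justifies using the distributional identities with $Y(s)\in V$ for a.e.\ $s$, since all coefficients are bounded. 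Symmetry of $A$ is used in the gradient term.

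\textbf{Step 3: matching the test classes.} This is the main obstacle, because $C_D^2(Q_T)$ and $H^1([0,T];V)$ are different. The easy direction is $u\Rightarrow Y$ for smooth $\phi$. For general $\phi\in H^1([0,T];V)$, I approximate $\phi$ by functions $J\widehat\varphi_n$ with $\widehat\varphi_n\in C^2([0,T]\times\overline{\OO})$ vanishing on $\partial\OO$. These are dense in $H^1([0,T];H_0^1)$, using mollification in time and $C_c^\infty$ approximation in space. I then pass to the limit in each term of \eqref{eq:fixed-variational-form}, using $Y\in L^2(\Omega;C([0,T];H)\cap L^2(0,T;V))$. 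The $\mathcal A$ terms are continuous in $\phi\in L^2(0,T;V)$. The $\dot\phi$ and endpoint terms are continuous because $H^1([0,T];V)\hookrightarrow C([0,T];V)$. The stochastic integral converges in $L^2(\Omega)$ by the Itô isometry and the linear growth \eqref{eq:moving-growth}.

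The converse direction $Y\Rightarrow u$ is immediate: $J\widehat\varphi$ already belongs to $H^1([0,T];V)$ for $\varphi\in C_D^2$, and the computation of Steps 1--2 reads backwards.
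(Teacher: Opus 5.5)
Your proposal is correct and follows essentially the same route as the paper: the test-function correspondence $\phi=J\widehat\varphi$, a term-by-term change of variables, cancellation of the first-order terms via \eqref{eq:q-geometric-form} and \eqref{eq:jacobian-transport-equivalence}, and a density extension to $H^1([0,T];V)$. The one small difference is in the density step. You approximate $\phi/J$ by $C^2$ functions and multiply back by $J$, whereas the paper divides $\phi\in C^2([0,T];C_c^\infty(\OO))$ by $J$; for the resulting $\varphi$ to lie in $C_D^2(Q_T)$ this tacitly needs $\partial_t^2 J$, which Assumption~\ref{ass:moving-domain} does not control, and your ordering avoids that requirement.
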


\begin{proof}
Let $u$ be the weak solution of \eqref{eq:moving-spde}.
Recall that
\[
H=L^2(\OO),\qquad V=H_0^1(\OO).
\]
Let $Y(t,y)=u(t,r(t,y))$. By the change of variables and the uniform $C^1$-regularity of $r$, we have
\begin{equation*}
Y\in L^2(\Omega;C([0,T];H))
\cap L^2(\Omega;L^2(0,T;V)).
\end{equation*}
For any
$\phi\in C^2([0,T];C_c^\infty(\OO))$, define a test function on the moving domain by
\begin{equation}
\varephi(t,x)
:=
\frac{\phi(t,\rho(t,x))}
{J(t,\rho(t,x))}.
\label{eq:moving-test-from-fixed}
\end{equation}
Then $\widehat\varphi(t,y)=\phi(t,y)/J(t,y)$. By the change of variables, we have
\begin{equation}
\int_{\mathcal O_t}u(t,x)\varephi(t,x)\,dx
=\int_{\OO}Y(t,y)\phi(t,y)\,dy
=(Y(t),\phi(t))_0.
\label{eq:pairing-transform-fixed}
\end{equation}
Note that
\begin{align}
\int_{\mathcal O_s}f(s,x,u(s,x))\varphi(s,x)\,dx
&=\int_{\OO}\widetilde f(s,y,Y(s,y))\phi(s,y)\,dy,
\label{eq:reaction-transform-fixed}\\
\int_{\mathcal O_s}g_i(s,x,u(s,x))\varphi(s,x)\,dx
&=\int_{\OO}\widetilde g_i(s,y,Y(s,y))\phi(s,y)\,dy.
\label{eq:noise-transform-fixed}
\end{align}
We claim that
\begin{align}
&-\int_{\mathcal O_s}
\nabla_xu(s,x)\cdot\nabla_x\varphi(s,x)\,dx
+\int_{\mathcal O_s}
u(s,x)\partial_s\varphi(s,x)\,dx\nonumber\\
&\qquad=
{}_{V^*}\langle\mathcal A(s,Y(s)),\phi(s)\rangle_V
-\int_{\OO}\widetilde f(s,y,Y(s,y))\phi(s,y)\,dy
+(Y,\dot\phi)_0.
\label{eq:drift-transport-equivalence}
\end{align}
In view of \eqref{eq:test-gradient-transform}, the right-hand side of the preceding equality can be written as
\begin{align*}
&-\int_{\OO}J(s,y)A(s,y)\nabla_yY(s,y)
\cdot\nabla_y\widehat{\varphi}(s,y)\,dy
+\int_{\OO}J(s,y)Y(s,y)
\left[
\partial_s\widehat{\varphi}(s,y)
+\partial_s\rho_i(s,r(s,y))
 \partial_{y_i}\widehat{\varphi}(s,y)
\right]\,dy\\
&=
-\int_{\OO}A(s,y)\nabla_yY(s,y)
\cdot\nabla_y\phi(s,y)\,dy
+\int_{\OO}a^{ij}(s,y)\partial_{y_i}Y(s,y)
\,\phi(s,y)
\frac{\partial_{y_j}J(s,y)}{J(s,y)}\,dy\\
&\quad
+\int_{\OO}Y(s,y)\partial_s\phi(s,y)\,dy
+\int_{\OO}Y(s,y)
\partial_s\rho_i(s,r(s,y))
\partial_{y_i}\phi(s,y)\,dy\\
&\quad
-\int_{\OO}Y(s,y)
\partial_s\rho_i(s,r(s,y))
\phi(s,y)
\frac{\partial_{y_i}J(s,y)}{J(s,y)}\,dy
-\int_{\OO}Y(s,y)\phi(s,y)
\frac{\partial_sJ(s,y)}{J(s,y)}\,dy.
\end{align*}
Combining the above equation with \eqref{eq:jacobian-transport-equivalence} and \eqref{eq:q-geometric-form}, we obtain \eqref{eq:drift-transport-equivalence}.

Substituting \eqref{eq:pairing-transform-fixed}--
\eqref{eq:drift-transport-equivalence} into
\eqref{eq:moving-weak-form} yields that \eqref{eq:fixed-variational-form} holds for $\phi\in C^2([0,T];C_c^\infty(\OO))$. Note that $
C^2([0,T];C_c^\infty(\OO))
$
is dense in $H^1(0,T;V)$. The identity  \eqref{eq:fixed-variational-form} for every
$\phi\in H^1([0,T];V)$ follows by a standard approximation argument.
Thus $Y$ is a variational solution in the sense of
Definition~\ref{def:fixed-variational-solution}.

Conversely, let $Y$ be a variational solution of
\eqref{eq:abstract-fixed-equation} and define $u(t,x)=Y(t,\rho(t,x))$. For every $t\in[0,T]$,
the change of variables gives
\[
\|u(t)\|_{L^2(\mathcal O_t)}\leq C\|Y(t)\|_H.
\]
The uniform $C^1$ bounds on $\rho$ and the
Jacobian bounds imply
\[
\|u(t)\|_{H_0^1(\mathcal O_t)}
\leq C\|Y(t)\|_V.
\]
Hence $u\in L^2(\Omega;C([0,T];L^2(\mathcal O_{\cdot})))
\cap
L^2(\Omega;L^2(0,T;H_0^1(\mathcal O_{\cdot})))$. 

Let $\varephi\in C_D^2(Q_{T})$ and set
\begin{equation}
\phi(t,y):=J(t,y)\widehat\varephi(t,y).
\label{eq:fixed-test-from-moving}
\end{equation}
Then $\phi\in H^1([0,T];V)$. By \eqref{eq:fixed-variational-form}, we have
\begin{align*}
(Y(t),\phi(t))_0
&=
(u_0,\phi(0))_0
+\int_0^t{}_{V^*}\langle \mathcal A(s,Y(s)),\phi(s)\rangle_V\,ds
\nonumber\\
&\quad
+\int_0^t(\mathcal B(s,Y(s))\,dW_s,\phi(s))_0
+\int_0^t(Y(s),\dot\phi(s))_0\,ds.
\end{align*}
Note that
\begin{align}
(Y(t),\phi(t))_0
=\int_{\mathcal O_t}u(t,x)\varephi(t,x)\,dx,\ \
(u_0,\phi(0))_0
=\int_{\OO}u_0(x)\varephi(0,x)\,dx.
\label{eq:reverse-terminal-pairing}
\end{align}
And the stochastic integral term satisfies
\begin{align}
\int_0^t(\mathcal B(s,Y(s))\,dW_s,\phi(s))_0
&=\sum_{i=1}^{\infty}
\int_{\OO}J(s,y)\widetilde g_i(s,y,Y(s,y))
\widehat{\varphi}(s,y)\,dy\,dw_s^i
\nonumber\\
&=\sum_{i=1}^{\infty}
\int_{\mathcal O_s}g_i(s,x,u(s,x))\varphi(s,x)\,dx\,dw_s^i.
\label{eq:reverse-noise-change}
\end{align}
We claim that
\begin{align}
{}_{V^*}\langle\mathcal A(s,Y),\phi\rangle_V
+(Y,\dot\phi)_0
=&-\int_{\mathcal O_s}\nabla_xu(s,x)\cdot\nabla_x\varphi(s,x)\,dx
+\int_{\mathcal O_s}u(s,x)\,\partial_s\varphi(s,x)\,dx\nonumber\\
&+\int_{\mathcal O_s}f(s,x,u(s,x))\varphi(s,x)\,dx.
\label{eq:reverse-drift-time-transform}
\end{align}
By \eqref{eq:fixed-test-from-moving} and the definition of
$\mathcal A$, we have
\begin{align}
{}_{V^*}\langle\mathcal A(s,Y),\phi\rangle_V
&=
-\int_{\OO}A(s,y)\nabla_yY(s,y)\cdot
\nabla_y\!\bigl(J(s,y)\widehat{\varphi}(s,y)\bigr)\,dy
\nonumber\\
&\quad
+\int_{\OO}J(s,y)q^i(s,y)
\partial_{y_i}Y(s,y)\widehat{\varphi}(s,y)\,dy
\nonumber\\
&\quad
+\int_{\OO}J(s,y)\widetilde f(s,y,Y(s,y))
\widehat{\varphi}(s,y)\,dy\nonumber\\
&=
-\int_{\OO}J(s,y)A(s,y)\nabla_yY(s,y)\cdot
\nabla_y\widehat{\varphi}(s,y)\,dy
\nonumber\\
&\quad
-\int_{\OO}J(s,y)\partial_s\rho_i(s,r(s,y))
\partial_{y_i}Y(s,y)\widehat{\varphi}(s,y)\,dy
\nonumber\\
&\quad
+\int_{\OO}J(s,y)\widetilde f(s,y,Y(s,y))
\widehat{\varphi}(s,y)\,dy,
\label{eq:reverse-principal-transport}
\end{align}
where the last step is due to \eqref{eq:q-geometric-form}.
Note that
\begin{align}
(Y,\dot\phi)_0
&=
\int_{\OO}J(s,y)Y(s,y)
\partial_s\widehat{\varphi}(s,y)\,dy
+\int_{\OO}Y(s,y)(\partial_sJ(s,y))
\widehat{\varphi}(s,y)\,dy.
\label{eq:reverse-time-term-expansion}
\end{align}
By adding \eqref{eq:reverse-principal-transport} and \eqref{eq:reverse-time-term-expansion}, and applying \eqref{eq:jacobian-transport-equivalence} together with \eqref{eq:weak-gradient-change}, we arrive at \eqref{eq:reverse-drift-time-transform}.

Substituting \eqref{eq:reverse-terminal-pairing}--
\eqref{eq:reverse-drift-time-transform} into
\eqref{eq:fixed-variational-form} gives 
\eqref{eq:moving-weak-form}. Therefore $u$ is a weak solution on moving domains
in the sense of Definition~\ref{def:moving-solution}, and the two notions of
solution are equivalent.
\end{proof}

\subsection{Verification of C1--C4.}\label{Verification of C1--C4}
In this subsection, we verify conditions {\rm(C1)--(C4)} required by the
nonhomogeneous variational framework of
\cite[Section~3.4]{PanWangZhaiZhang2026} for the time-dependent Hilbert
structure introduced in (\ref{eq:time-inner-product}) and (\ref{eq:structural-operators}). Since $r(0,y)=y$, we have $J(0,y)=1$ and hence
the inner product $(\cdot,\cdot)_0$ coincides with the standard inner
product on $H=L^2(\OO)$. We first recall the four structural conditions required in \cite{PanWangZhaiZhang2026} and
then verify them one by one.
\begin{enumerate}[label=(C\arabic*)]
\item there exists $c_1\geq1$ such that
\begin{equation*}
c_1^{-1}|v|_0\leq |v|_t\leq c_1|v|_0,
\qquad v\in H,\quad t\in[0,T];
\end{equation*}
\item there is a family of self-adjoint operators
$\{\Phi(t)\}_{t\in[0,T]}\subset\mathcal L(H)$ such that
\begin{equation*}
\int_0^{T}\|\Phi(t)\|_{\mathcal L(H)}\,dt<\infty
\end{equation*}
and
\begin{equation}
|v|_t^2-|v|_0^2
=\int_0^t(v,\Phi(s)v)_0\,ds,
\qquad v\in H,\quad t\in[0,T];
\label{eq:C2-identity}
\end{equation}
\item $\iota_t^*$ maps $V$ into itself and
\begin{equation*}
\|\iota_t^*v\|_V\leq c_2\|v\|_V,
\qquad v\in V,\quad t\in[0,T],
\end{equation*}
for a constant $c_2$ independent of $t$;
\item $\iota_t^*:V\to V$ is bijective and
\begin{equation*}
\|\iota_{-t}^*v\|_V\leq c_3\|v\|_V,
\qquad v\in V,\quad t\in[0,T],
\end{equation*}
for a constant $c_3$ independent of $t$.
\end{enumerate}

\emph{Verification of C1.}
Put
\begin{equation*}
m:=\inf_{(t,y)\in[0,T]\times\OO}J(t,y),
\qquad
M:=\sup_{(t,y)\in[0,T]\times\OO}J(t,y).
\end{equation*}
Assumption~\ref{ass:moving-domain} implies $0<m\leq M<\infty$.
Therefore, for every $v\in H$ and $t\in[0,T]$,
\begin{equation}
m|v|_0^2
\leq
|v|_t^2
=\int_{\OO}|v(y)|^2J(t,y)\,dy
\leq
M|v|_0^2.
\label{eq:C1-equivalence}
\end{equation}
Thus C1 holds with
$c_1=\max\{m^{-1/2},M^{1/2}\}$.

\emph{Verification of C2.}
Differentiating $J(t,y)=
\det D_y r(t,y)$ with respect to the time variable, we have
\begin{align}
\partial_tJ(t,y)
&=
\partial_t\det D_y r(t,y)
\nonumber\\
&=
\det D_y r(t,y)
\operatorname{tr}
\left(
(D_y r(t,y))^{-1}D_y\partial_t r(t,y)
\right)
\nonumber\\
&=
J(t,y)
\operatorname{tr}
\left(
(D_y r(t,y))^{-1}D_y\partial_t r(t,y)
\right).
\label{eq:jacobi-formula-J}
\end{align}
By Assumption~\ref{ass:moving-domain}, 
\begin{equation}
\sup_{t\in[0,T]}
\|\partial_tJ(t,\cdot)\|_{L^\infty(\OO)}
<\infty.
\label{eq:J-time-bound}
\end{equation}

We now define, for every $t\in[0,T]$, the multiplication operator
$\Phi(t):H\to H$ by
\begin{equation}
[\Phi(t)v](y)
:=
\partial_tJ(t,y)v(y),
\qquad v\in H=L^2(\OO).
\label{eq:C2-Phi}
\end{equation}
For every $v,w\in H$,
\begin{align*}
(\Phi(t)v,w)_0
&=
\int_{\OO}\partial_tJ(t,y)v(y)w(y)\,dy\\
&=
\int_{\OO}v(y)\partial_tJ(t,y)w(y)\,dy
=
(v,\Phi(t)w)_0.
\end{align*}
Furthermore, for every $v\in H$,
\begin{align*}
\|\Phi(t)v\|_H^2
&=
\int_{\OO}
|\partial_tJ(t,y)|^2|v(y)|^2\,dy\\
&\leq
\|\partial_tJ(t,\cdot)\|_{L^\infty(\OO)}^2
\|v\|_H^2.
\end{align*}
It follows that
\begin{equation*}
\|\Phi(t)\|_{\mathcal L(H)}
\leq
\|\partial_tJ(t,\cdot)\|_{L^\infty(\OO)}<\infty.
\end{equation*}
Since $J(0,y)=1$, we have for every $v\in H$ 
\begin{align}
|v|_t^2-|v|_0^2
&=
\int_{\OO}|v(y)|^2\bigl(J(t,y)-J(0,y)\bigr)\,dy
\nonumber\\
&=
\int_0^t\int_{\OO}
|v(y)|^2\partial_sJ(s,y)\,dy\,ds
\nonumber\\
&=
\int_0^t(v,\Phi(s)v)_0\,ds.
\end{align}
Thus we confirm the validity of C2.


\emph{Verification of C3.}
By Assumption \ref{ass:moving-domain}, we have
\begin{equation}
\sup_{t\in[0,T]}
\|J(t,\cdot)\|_{W^{1,\infty}(\OO)}<\infty.
\label{eq:J-W1infty}
\end{equation}
Recalling $\iota_t^*v=J(t,\cdot)v$, the product rule gives, for every
$v\in V$,
\begin{align*}
\|\iota_t^*v\|_V
=\|J(t,\cdot)v\|_{H_0^1(\OO)}
&\leq
\|J(t,\cdot)\|_{L^\infty(\OO)}
\|v\|_{H_0^1(\OO)}
+
\|\nabla_yJ(t,\cdot)\|_{L^\infty(\OO)}
\|v\|_{L^2(\OO)}\\
&\leq C\|v\|_V,
\end{align*}
where $C$ is independent of $t$. Hence, condition {\rm(C3)} holds.

\emph{Verification of C4.}
By Assumption \ref{ass:moving-domain}, there exists a constant $\kappa>0$ such that for any $t\in[0,T]$,
\[
\|J(t,\cdot)^{-1}\|_{L^\infty(\OO)}
\leq \kappa^{-1}.
\]
Moreover, the chain rule gives
\[
\nabla_y\bigl(J(t,y)^{-1}\bigr)
=
-\frac{\nabla_yJ(t,y)}{J(t,y)^2}.
\]
Consequently,
\[
\|\nabla_yJ(t,\cdot)^{-1}\|_{L^\infty(\OO)}
\leq
\kappa^{-2}\|\nabla_yJ(t,\cdot)\|_{L^\infty(\OO)}.
\]
Together with \eqref{eq:J-W1infty}, this yields
\begin{equation}
\sup_{t\in[0,T]}
\|J(t,\cdot)^{-1}\|_{W^{1,\infty}(\OO)}
<\infty.
\label{eq:J-inverse-W1infty}
\end{equation}
Since $\iota_{-t}^*v=J(t,\cdot)^{-1}v$, the product rule and
\eqref{eq:J-inverse-W1infty} imply
\begin{equation*}
\|\iota_{-t}^*v\|_V
=
\|J(t,\cdot)^{-1}v\|_{H_0^1(\OO)}
\leq C\|v\|_V,
\qquad v\in V,
\end{equation*}
with $C$ independent of $t$. Finally,
\[
\iota_t^*\iota_{-t}^*
=
\iota_{-t}^*\iota_t^*
=
I
\quad\text{on }V.
\]
Thus $\iota_t^*:V\to V$ is bijective, and condition {\rm(C4)} follows.
This completes the verification of C1--C4.

\subsection{Verification of H1--H5.}\label{Verification of H1--H5}
In this subsection, we verify hypotheses {\rm(H1)--(H5)} in the
nonhomogeneous variational framework of
\cite[Section~2.3]{PanWangZhaiZhang2026}. We first clarify the action of
$\iota_t^*$ on the dual space $V^*$. 
In particular, for the drift operator $\mathcal A$,
\begin{equation*}
{}_{V^*}\langle\iota_t^*\mathcal A(t,v),\psi\rangle_V
=
{}_{V^*}\langle\mathcal A(t,v),J(t,\cdot)\psi\rangle_V.
\end{equation*}

For
$
h\in L^1(0,T;\mathbb R_+)
$, and 
$
\alpha\in(1,\infty),
$
consider the following hypotheses stated in \cite{PanWangZhaiZhang2026}. 
\begin{enumerate}[label=(H\arabic*)]
\item For almost every $t\in[0,T]$ and every $u,v,x\in V$, the map
\begin{equation*}
\lambda\in\R\longmapsto
{}_{V^*}\langle\mathcal A(t,u+\lambda v),\iota_t^*x\rangle_V
\end{equation*}
is continuous.

\item There exist a locally bounded measurable function
$\varrho:V\to[0,\infty)$ and constants $\gamma,C\geq0$ such that, for
almost every $t$ and all $u,v\in V$,
\begin{align}
&2{}_{V^*}\langle
\iota_t^*\mathcal A(t,u)-\iota_t^*\mathcal A(t,v),u-v
\rangle_V
+\|\mathcal B(t,u)-\mathcal B(t,v)\|_{L_2(\ell^2,H_t)}^2
\nonumber\\
&\qquad\leq
\bigl[h(t)+\varrho(v)\bigr]|u-v|_t^2,
\label{eq:H2-abstract}
\end{align}
and
\begin{equation*}
|\varrho(u)|
\leq C(1+\|u\|_V^\alpha)(1+|u|_0^\gamma).
\end{equation*}

\item There exists $c>0$ such that, for almost every $t$ and every
$u\in V$,
\begin{equation}
2{}_{V^*}\langle\iota_t^*\mathcal A(t,u),u\rangle_V
+\|\mathcal B(t,u)\|_{L_2(\ell^2,H_t)}^2
\leq h(t)(1+|u|_t^2)-c\|u\|_V^\alpha.
\label{eq:H3-abstract}
\end{equation}

\item There exist constants $\beta,C\geq0$ such that, for almost every
$t$ and every $u\in V$,
\begin{equation}
\|\iota_t^*\mathcal A(t,u)\|_{V^*}^{\frac{\alpha}{\alpha-1}}
\leq
h(t)+C\|u\|_V^\alpha(1+|u|_0^\beta).
\label{eq:H4-abstract}
\end{equation}

\item For almost every $t$ and every $u\in V$,
\begin{equation}
\|\mathcal B(t,u)\|_{L_2(\ell^2,H_t)}^2
\leq h(t)(1+|u|_t^2).
\label{eq:H5-abstract}
\end{equation}
\end{enumerate}
We verify these hypotheses for equation \eqref{eq:abstract-fixed-equation} with
\begin{equation}
\alpha=2,\qquad \beta=\gamma=0,\qquad
\varrho(v)\equiv C_0,\qquad
h(t)\equiv C_0
\bigl(1+ K_{0}^2\bigr),
\label{eq:H-parameter-choice}
\end{equation}
where $C_0$ may be enlarged below but is independent of $t$ and $v$.

\emph{Verification of H1.}
For $u,v,x\in V$,
\begin{align*}
&{}_{V^*}\langle
\mathcal A(t,u+\lambda v),\iota_t^*x\rangle_V
\nonumber\\
=&
-\int_{\OO}A(t,y)\nabla_y(u+\lambda v)(y)\cdot\nabla_y(J(t,y)x(y))\,dy
+\int_{\OO}q^i(t,y)\partial_{y_i}(u+\lambda v)(y)J(t,y)x(y)\,dy
\nonumber\\
&
+\int_{\OO}\widetilde f(t,y,u(y)+\lambda v(y))J(t,y)x(y)\,dy.
\end{align*}
The first two terms are affine functions of $\lambda$ and hence depend
continuously on $\lambda$.
For the last term, by \eqref{eq:fg-lipschitz} we have when
$\lambda_n\to\lambda$,
\begin{equation*}
\|\widetilde f(t,\cdot,u(\cdot)+\lambda_n v(\cdot))
-\widetilde f(t,\cdot,u(\cdot)+\lambda v(\cdot))\|_{L^2(\OO)}
\leq L|\lambda_n-\lambda|\,\|v\|_{L^2(\OO)}\to0.
\end{equation*}
Since $J(t,\cdot)x\in L^2(\OO)$, the last term is also continuous. Thus H1
holds.

\emph{Verification of H2.}
Set $z=u-v$. By the uniform ellipticity of $A$, there exists a constant
$\lambda_A>0$ such that
\begin{equation*}
\xi^\top A(t,y)\xi
\geq
\lambda_A|\xi|^2,
\qquad
\xi\in\mathbb R^d,
\end{equation*}
for almost every $(t,y)\in[0,T]\times\OO$. 
By \eqref{eq:jacobian-transport-equivalence}, \eqref{eq:q-geometric-form}, we obtain
\begin{align}
&2{}_{V^*}\left\langle
\iota_t^*\mathcal A(t,u)-\iota_t^*\mathcal A(t,v),z
\right\rangle_V
\nonumber\\
&\leq
-2\lambda_A\int_{\OO}J(t,y)
 |\nabla_y z(y)|^2\,dy
-2\int_{\OO}J(t,y)
 \partial_t\rho_i(t,r(t,y))
 \partial_{y_i}z(y)z(y)\,dy
\nonumber\\
&\quad
+2\int_{\OO}J(t,y)
 \bigl(\widetilde f(t,y,u(y))
      -\widetilde f(t,y,v(y))\bigr)z(y)\,dy
\nonumber\\
&=
-2\lambda_A\int_{\OO}J(t,y)
 |\nabla_y z(y)|^2\,dy
+\int_{\OO}\partial_{y_i}
 \Bigl(J(t,y)\partial_t\rho_i(t,r(t,y))\Bigr)
 z(y)^2\,dy
\nonumber\\
&\quad
+2\int_{\OO}J(t,y)
 \bigl(\widetilde f(t,y,u(y))
      -\widetilde f(t,y,v(y))\bigr)z(y)\,dy
\nonumber\\
&=
-2\lambda_A\int_{\OO}J(t,y)
 |\nabla_y z(y)|^2\,dy
-\int_{\OO}\partial_tJ(t,y)z(y)^2\,dy
\nonumber\\
&\quad
+2\int_{\OO}J(t,y)
 \bigl(\widetilde f(t,y,u(y))
      -\widetilde f(t,y,v(y))\bigr)z(y)\,dy
\nonumber\\
&\leq
-2\lambda_A \kappa \|z\|_V^2
+C_f|z|_t^2.
\label{eq:weighted-drift-monotonicity}
\end{align}
Here we used the boundedness of $\partial_tJ$ and the Lipschitz continuity
of $\widetilde f$ with respect to its last variable.

Moreover, by the Lipschitz continuity of $\widetilde g$ with respect to its
last variable, there exists a constant $L_g>0$ such that
\begin{equation*}
\|\mathcal B(t,u)-\mathcal B(t,v)\|_{L_2(\ell^2,H_t)}^2
 =\int_{\OO}J(t,y)
\|\widetilde g(t,y,u(y))-\widetilde g(t,y,v(y))\|_{\ell^2}^2\,dy
\leq L_g|u-v|_t^2.
\end{equation*}
Combining this estimate with \eqref{eq:weighted-drift-monotonicity}, we obtain
\begin{align}
&2{}_{V^*}\langle
\iota_t^*\mathcal A(t,u)-\iota_t^*\mathcal A(t,v),u-v
\rangle_V
+\|\mathcal B(t,u)-\mathcal B(t,v)\|_{L_2(\ell^2,H_t)}^2
\nonumber\\
&\qquad\leq
-2\lambda_AJ_*\|u-v\|_V^2
+\bigl(C_f+L_g\bigr)|u-v|_t^2.
\label{eq:H2-corrected}
\end{align}
Thus, condition \eqref{eq:H2-abstract} holds with $
\varrho=C_f+L_g$.

\emph{Verification of H3.}
Using \eqref{eq:q-geometric-form}, we obtain
\begin{align}
&2{}_{V^*}\langle\iota_t^*\mathcal A(t,v),v\rangle_V
+\|\mathcal B(t,v)\|_{L_2(\ell^2,H_t)}^2
\nonumber\\
&=
-2\int_{\OO}J(t,y)A(t,y)\nabla_yv(y)\cdot\nabla_yv(y)\,dy
-2\int_{\OO}J(t,y)\partial_t\rho_i(t,r(t,y))
 \partial_{y_i}v(y)v(y)\,dy
\nonumber\\
&\quad
+2\int_{\OO}J(t,y)\widetilde f(t,y,v(y))v(y)\,dy
+\int_{\OO}J(t,y)
 \|\widetilde g(t,y,v(y))\|_{\ell^2}^2\,dy .
\label{eq:H3-expanded}
\end{align}

The first term is controlled by the uniform ellipticity of $A$. 
For the second term, similar to the proof of \eqref{eq:weighted-drift-monotonicity}, we have
\begin{align*}
&-2\int_{\OO}J(t,y)\partial_t\rho_i(t,r(t,y))
 \partial_{y_i}v(y)v(y)\,dy
=-\int_{\OO}\partial_tJ(t,y)v(y)^2\,dy.
\end{align*}
Since $\partial_tJ/J\in L^\infty([0,T]\times\OO)$, it follows that
\begin{equation*}
-\int_{\OO}\partial_tJ(t,y)v(y)^2\,dy
\leq
\left\|\frac{\partial_tJ(t,\cdot)}{J(t,\cdot)}
\right\|_{L^\infty(\OO)}
|v|_t^2.
\end{equation*}

Finally, by the growth assumptions on $\widetilde f$ and $\widetilde g$,
Young's inequality, and the equivalence of $|\cdot|_t$ and $|\cdot|_0$, for
every $\varepsilon>0$,
\begin{align*}
&2\int_{\OO}J(t,y)\widetilde f(t,y,v(y))v(y)\,dy
+\int_{\OO}J(t,y)
 \|\widetilde g(t,y,v(y))\|_{\ell^2}^2\,dy\\
&\qquad\leq
\varepsilon\|v\|_V^2
+C_\varepsilon|v|_t^2
+C_\varepsilon\bigl(1+K_{0}^2\bigr).
\end{align*}
Choosing $\varepsilon<2\lambda_AJ_*$ and combining the preceding estimates,
we obtain
\begin{align}
&2{}_{V^*}\langle\iota_t^*\mathcal A(t,v),v\rangle_V
+\|\mathcal B(t,v)\|_{L_2(\ell^2,H_t)}^2
\leq
-c\|v\|_V^2
+C|v|_t^2
+C\bigl(1+K_{0}^2\bigr),
\label{eq:H3-corrected}
\end{align}
for some $c>0$ independent of $t$ and $v$. 


\emph{Verification of H4.}
By condition C3, we have for every $\psi\in V$,
\[
\|J(t,\cdot)\psi\|_V
\leq C\|\psi\|_V.
\]
Moreover, the boundedness of $A$, $q$, $J$, and $\nabla J$, together with
the growth assumption \eqref{eq:moving-growth}, implies
\begin{equation*}
\|\mathcal A(t,v)\|_{V^*}
\leq
C\left(
\|v\|_V
+\|\widetilde f(t,\cdot,v)\|_{L^2(\OO)}
\right)
\leq
C\left(
1+K_{0}
+\|v\|_V
\right).
\end{equation*}
Therefore,
\begin{align*}
\|\iota_t^*\mathcal A(t,v)\|_{V^*}
&=
\sup_{\|\psi\|_V\leq1}
\left|
{}_{V^*}\langle\mathcal A(t,v),J(t,\cdot)\psi\rangle_V
\right|\\
&\leq
C\left(
1+K_{0}
+\|v\|_V
\right).
\end{align*}
from which we obtain
\eqref{eq:H4-abstract} with $\beta=0$.

\emph{Verification of H5.}
Finally, \eqref{eq:moving-growth} and the uniform upper bound for $J$ imply
\begin{equation*}
\|\mathcal B(t,v)\|_{L_2(\ell^2,H_t)}^2
=
\int_{\OO}J(t,y)
\|\widetilde g(t,y,v(y))\|_{\ell^2}^2\,dy
\leq
C\bigl(K_{0}^2+|v|_t^2\bigr),
\end{equation*}
which is bounded by $h(t)(1+|v|_t^2)$ after one final enlargement of $C_0$.
Thus H5 holds.\vspace{10pt}

\noindent{\bf Completion of the proof of Theorem~\ref{thm:moving-wellposed}}\\
After the verification of $C1-C3$, $H1-H5$, we can now apply Theorem 2.8 in \cite{PanWangZhaiZhang2026} to obtain the existence and
uniqueness of the solution of equation \eqref{eq:abstract-fixed-equation}. The equivalence established
in Section~\ref{sec:solution-equivalence} transfers this result to the case of moving
domains. This completes the proof of Theorem~\ref{thm:moving-wellposed}.

\section{Dirichlet H\"older regularity of solutions of SPDEs on fixed domain}
\label{sec:fixed-regularity}
In this section, we establish a regularity result for solutions of SPDEs, which extends the results of \cite{HsuWangWang2017} to bounded domains with homogeneous Dirichlet boundary conditions and additional drift terms. The spatial domain is denoted by $D\subset\R^n$.

\subsection{Setting and statement of the result}
Let $D\subset \R^n$ be a bounded $C^{1,1}$ domain. We consider the stochastic parabolic equation
\begin{equation}
\left\{
\begin{aligned}
&du
=
\operatorname{div}(A\nabla u)\,dt
+
b(t,x,u)\cdot\nabla u\,dt
+
f(t,x,u)\,dt
+
g_i(t,x,u)\,dw_t^i,
&& (t,x)\in(0,\infty)\times D,
\\
&u
=0,
&& (t,x)\in(0,\infty)\times\partial D,
\\
&u(0)
=u_0,
&& x\in D.
\end{aligned}
\right.
\label{eq:dirichlet-spde}
\end{equation}

Here $\{w^i\}_{i\in\N}$ is a sequence of independent standard Brownian motions on a filtered probability space
$(\Omega,\mathcal F,\{\mathcal F_t\}_{t\geq0},\PP)$. 
The coefficient $g=(g_i)_{i\in\mathbb N}$ is $\ell^2$-valued, whereas
$b=b(t,x,r)$ is an $\mathbb R^n$-valued drift
coefficient. We assume that, for every fixed $x\in D$ and every
real-valued progressively measurable process
$h=(h_t)_{t\geq0}$, the processes
$
f(t,x,h_t),g(t,x,h_t),b(t,x,h_t)
$
are progressively measurable.

Consider the following assumptions:
\begin{assumption}\label{ass:fixed-domain-coefficients}
The coefficients satisfy the following conditions.

\begin{enumerate}[label=(\roman*)]
\item The matrix field $A=A(t,x,\omega)$ is progressively measurable and uniformly elliptic: there is $\lambda\in(0,1]$ such that
$$
\lambda |\xi|^2
\leq
 \xi^\top A(t,x,\omega) \xi
\leq
\lambda^{-1}|\xi|^2
$$
for $\xi\in\R^n$ and every $(t,x,\omega)\in(0,\infty)\times D\times \Omega$.

\item There are constants $\Lambda>0$ and a non-negative function
$K\in L^\infty(D)$ such that
$$
|f(t,x,r)|+\norm{g(t,x,r)}_{\ell^2}
\leq
K(x)+\Lambda |r|
$$
for every $(t,x,r)\in(0,\infty)\times D\times \R$.

\item There is a constant $B\geq0$ such that
$$
|b(t,x,r)|\leq B
$$
for every $(t,x,r)\in(0,\infty)\times D\times \R$.

\item The initial datum $u_0\in L^2(D)$ is deterministic.
\end{enumerate}
\end{assumption}

A process $u$ is called a variational solution of
\eqref{eq:dirichlet-spde} if
\begin{align*}
u &\in L^2\bigl(\Omega;C([0,T_0];L^2(D))\bigr)
\cap
L^2\bigl(\Omega\times(0,T_0);H_0^1(D)\bigr)
\end{align*}
for every $T_0>0$, and, for every deterministic
$\varphi\in H^1\bigl([0,T_0];H_0^1(D)\bigr)$,
\begin{align*}
\ip{u(t)}{\varphi(t)}
&=
\ip{u_0}{\varphi(0)}
-
\int_0^t
\ip{A(s)\nabla u(s)}{\nabla\varphi(s)}\,ds
+
\int_0^t
\ip{b(s,\cdot,u(s))\cdot\nabla u(s)}{\varphi(s)}\,ds\nonumber\\
&\quad
+
\int_0^t
\ip{f(s,\cdot,u(s))}{\varphi(s)}\,ds
+
\int_0^t
\ip{g_i(s,\cdot,u(s))}{\varphi(s)}\,dw_s^i
 +
\int_0^t\ip{u(s)}{\dot\varphi(s)}\,ds
\end{align*}
almost surely, for all $t\in[0,T_0]$. Here $\dot\varphi$ denotes the weak
time derivative.

We now state the main result of this section.
\begin{theorem}
\label{thm:main}
Let $u$ be the
variational solution of \eqref{eq:dirichlet-spde}
in the sense specified above. Under Assumption \ref{ass:fixed-domain-coefficients}, there exists
$$
\alpha=\alpha(n,\lambda,\Lambda,B,D)\in(0,1)
$$
such that for every $T_0>0$,
$$
u\in C^{\alpha/2,\alpha}([T_0,2T_0]\times\overline D)
\quad\text{almost surely.}
$$
Moreover, for every $p>0$, there exists a constant
$$
C=C(n,\lambda,\Lambda,B,T_0,p,D)
$$
such that
\begin{align}
\E \norm{u}_{C^{\alpha/2,\alpha}([T_0,2T_0]\times\overline D)}^p
&\leq
C
\left(
\norm{u_0}_{L^2(D)}
+
\norm{K}_{L^\infty(D)}
\right)^p .
\label{eq:holder-moment}
\end{align}
\end{theorem}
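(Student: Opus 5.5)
We plan to follow the scheme of \cite{HsuWangWang2017}: split the solution into a part that carries the noise and a part that solves a deterministic (pathwise) parabolic equation, control the noisy part by stochastic estimates, and control the deterministic part by De Giorgi iteration adapted to Dirichlet boundary data.

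\textbf{Step 1: energy and moment bounds.} Apply It\^o's formula to $\|u(t)\|_{L^2(D)}^2$. Use uniform ellipticity, the bound $|b|\le B$ (absorbed by Young's inequality into $\lambda\|\nabla u\|^2$), the linear growth in Assumption~\ref{ass:fixed-domain-coefficients}(ii) and BDG. This gives
\[
\E\sup_{t\le 2T_0}\|u(t)\|_{L^2}^p+\E\Bigl(\int_0^{2T_0}\|\nabla u\|_{L^2}^2dt\Bigr)^{p/2}\le C\bigl(\|u_0\|_{L^2}+\|K\|_{L^\infty}\bigr)^p .
\]
Applying the same computation to the truncations $(u-k)_\pm$ localized by time cut-offs gives the stochastic Caccioppoli (level-set) inequalities. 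No spatial cut-off is needed near $\partial D$: since $u=0$ on $\partial D$, $(u-k)_+\in H_0^1$ for $k\ge0$.

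\textbf{Step 2: decomposition.} Write $u=v+z$, where $z$ solves the linear equation
\[
dz=\operatorname{div}(A\nabla z)\,dt+g_i(t,x,u)\,dw^i_t,\qquad z|_{\partial D}=0,\qquad z(0)=0 .
\]
Then $v$ solves, pathwise, the deterministic Dirichlet problem
\[
\partial_t v=\operatorname{div}(A\nabla v)+b\cdot\nabla v+\bigl[b\cdot\nabla z+f(\cdot,u)\bigr].
\]
For $z$, the plan is to use the stochastic De Giorgi iteration of \cite{HsuWangWang2017}, or equivalently a Kolmogorov argument on parabolic Dirichlet estimates for $z$ (heat-kernel bounds for divergence-form operators on $C^{1,1}$ domains). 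This should give $\E\|z\|^p_{C^{\alpha/2,\alpha}}\le C\,\E\sup_t\|g(u)\|^{p}_{L^q}$ for large $q$. Since $A$ is merely progressively measurable, the honest route is the one in \cite{HsuWangWang2017}: run De Giorgi directly on $u$ with the stochastic integral controlled level by level through exponential martingale or BDG estimates.

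So the concrete order is:
\begin{enumerate}[label=(\alph*)]
\item Obtain the $L^\infty$ bound $\E\sup_{[T_0/2,2T_0]\times D}|u|^p\le C(\|u_0\|+\|K\|_\infty)^p$ by stochastic De Giorgi iteration on $(u-k)_+$ over shrinking time intervals, with all of $D$ as the spatial region.
\item Prove oscillation decay in interior parabolic cylinders as in \cite{HsuWangWang2017}. The drift $b\cdot\nabla u$ is a lower-order term, absorbed in the energy inequality at small scales.
\item Prove oscillation decay at the boundary. Flatten $\partial D$ locally by a $C^{1,1}$ map; this preserves the divergence structure and ellipticity with the constant changed by a factor depending only on $D$. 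Then use the exterior measure condition: $u=0$ on the complement, so $(u-k)_+$ for $k\ge \tfrac12\sup u$ vanishes on a fixed fraction of each boundary cylinder. The De Giorgi lemma then yields oscillation decay without the ``positive measure'' alternative needed in the interior.
\item Combine interior and boundary decay in the standard way to obtain a global H\"older modulus on $[T_0,2T_0]\times\overline D$. Take $p$-th moments, using the random constants from the stochastic parts, which have all moments bounded by Step 1 and (a).
\end{enumerate}

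\textbf{Main obstacle.} The hardest step is the stochastic oscillation decay, especially at the boundary. The martingale term in the Caccioppoli inequality is not pathwise small, so each iteration step only holds off an event of small probability. Following \cite{HsuWangWang2017}, the plan is to replace the stochastic integral at each scale by its supremum, controlled via BDG by $\|g(u)\|$ on the cylinder. The growth bound and the $L^\infty$ bound from (a) make this contribute a term of order $r^{\theta}\,(\|u_0\|+\|K\|)\cdot X$, where $X$ is a random variable with finite moments. The oscillation recursion then reads $\operatorname{osc}_{r/4}\le (1-\delta)\operatorname{osc}_r+C r^\theta X$, which iterates to a H\"older bound with random constant $X$. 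The boundary version needs the flattened equation to keep the same form; this is where the $C^{1,1}$ regularity of $D$ enters, and it determines the dependence of $\alpha$ on $D$.
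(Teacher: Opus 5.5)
\textbf{This is a genuine gap.} Your step (a) matches the paper's first half (Propositions~\ref{prop:degiorgi-step}--\ref{prop:linfty}): De Giorgi iteration over all of $D$ with deterministic levels $a(1-2^{-k})$ on shrinking time intervals, using $(u-k)_+\in H_0^1(D)$. The gap is in (b)--(d). These carry the entire H\"older statement, but they are only asserted.

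In a De Giorgi oscillation-decay argument the truncation levels are built from $\sup_{Q_r}u$ and $\inf_{Q_r}u$. These are random and depend on $u$ at later times inside the cylinder. So the level-by-level It\^o/BDG control that works in (a), which is a probability bound for each \emph{fixed} deterministic $a$, does not transfer. To get a recursion $\operatorname{osc}_{r/4}u\le(1-\delta)\operatorname{osc}_r u+Cr^\theta X$ with a single random variable $X$ having finite moments, you would need several things, none of which you supply:
\begin{itemize}
\item the Caccioppoli inequalities to hold simultaneously for a continuum of levels, centres and radii;
\item the martingale terms bounded uniformly over all of them, which requires a chaining or union-bound argument over all cylinders at all scales;
\item the measure-theoretic De Giorgi lemmas, including at the boundary, rerun with these random errors;
\item moment bounds for the resulting $X$.
\end{itemize}
This is also not how the paper, or \cite{HsuWangWang2017} which it follows, passes from boundedness to H\"older continuity.

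\textbf{The missing idea is the right decomposition.} You correctly saw that putting $\operatorname{div}(A\nabla\cdot)$, with random and merely measurable $A$, into the stochastic part rules out any $L^p$ gradient theory. The fix is to put the Dirichlet Laplacian there instead, starting at $S=T_0/2$ rather than at $0$, since high integrability of $g(u)$ is only available away from $t=0$. Let $dv=\Delta v\,dt+g_i(t,x,u)\,dw_t^i$ with $v|_{\partial D}=0$ and $v(S)=0$. By (a), $g(\cdot,\cdot,u)\in L^p(\Omega\times(S,2T_0)\times D;\ell^2)$ for every $p$. The $L^p$ theory on $C^{1,1}$ domains \cite{Kim2004} then gives, for $p>n+2$, H\"older moments of $v$ and $\E\|\nabla v\|_{L^p}^p\le C(\|u_0\|_{L^2}+\|K\|_{L^\infty})^p$.

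Then $\phi=u-v$ solves, for each fixed $\omega$, the deterministic Dirichlet problem
\[
\partial_t\phi=\operatorname{div}(A\nabla\phi)+b\cdot\nabla\phi+\bigl[f(\cdot,\cdot,u)+b\cdot\nabla v\bigr]+\operatorname{div}\bigl((A-I)\nabla v\bigr).
\]
Here the discrepancy between $A$ and $I$ enters only as a divergence-form forcing in $L^p$ with $p>n+2$. Apply the classical boundary De Giorgi--Nash--Moser estimate \cite{LadyzhenskayaSolonnikovUraltseva1968}, using $\|\phi\|_{L^\infty}$ from (a) and the bounds on $v$. This gives $\phi\in C^{\alpha_2/2,\alpha_2}$ with a constant depending only on $n,\lambda,B,p,T_0,D$. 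Taking $\alpha=\min\{\alpha_1,\alpha_2\}$ and $p$-th moments yields \eqref{eq:holder-moment}.

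This route confines all stochastic difficulty to step (a), where the levels are deterministic. It makes both the stochastic oscillation decay and the boundary flattening unnecessary.
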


\vskip 0.4cm

The proof has two parts. First, the stochastic De Giorgi iteration gives
an $L^\infty$-moment bound up to the boundary. Second, one passes from
boundedness to H\"older regularity by decomposing the variational solution
into a solution of the stochastic heat equation and a solution of a parabolic equation with random coefficients.

\subsection{Stochastic De Giorgi iteration}

Fix $T_0>0$. For $k\geq0$, set
$$
I_k=\bigl[(1-2^{-k})T_0,2T_0\bigr].
$$
For $a\geq1$, define
$$
u_{k,a}
=
\bigl(u-a(1-2^{-k})\bigr)_+,
$$
and
$$
U_{k,a}^{D}
=
\left(
\int_{I_k}
\norm{u_{k,a}(t)}_{L^2(D)}^4\,dt
\right)^{1/2}.
$$
Notice that $U_{0,a}^{D}=\norm{u^+}_{L^4(0,2T_0;L^2(D))}^2$.


For $j\geq0$, define the martingale increment
\begin{align*}
X_{j,a}^{*,D}
=
\sup_{(1-2^{-j-1})T_0\leq s\leq t\leq 2T_0}
\int_s^t
\ip{g_i(\tau,\cdot,u(\tau))}{u_{j+1,a}(\tau)}_{L^2(D)}
\,dw_\tau^i .
\end{align*}
Thus $X_{k-1,a}^{*,D}$ is precisely the martingale term that appears in the energy estimate for $u_{k,a}$.

\begin{proposition}
\label{prop:degiorgi-step}
Assume
$
\norm{K}_{L^\infty(D)}\leq1.
$
If $n\geq3$, then there is a constant $C=C(n,\lambda,\Lambda,B,T_0,D)$ such that
\begin{align}
U_{k,a}^{D}
&\leq
\frac{C^k}{a^{2/(n+1)}}
\left(
U_{k-1,a}^{D}
+
X_{k-1,a}^{*,D}
\right)
\left(U_{k-1,a}^{D}\right)^{1/(n+1)} .
\label{eq:degiorgi-iteration}
\end{align}
For $n=1,2$ and every $\mu\in(0,1/3)$, the same estimate holds with $1/(n+1)$ replaced by $\mu$ and $2/(n+1)$ replaced by $2\mu$.
\end{proposition}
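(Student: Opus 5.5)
We follow the scheme of \cite{HsuWangWang2017}, adapted to the Dirichlet problem on $D$. The proof has three steps: an energy (Caccioppoli) inequality for the truncations $u_{k,a}$ on the intervals $I_k$, an interpolation between energy and $L^4_tL^2_x$ norms on the set where $u_{k,a}>0$, and a Chebyshev-type bound on the measure of that set.

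\emph{Step 1: energy inequality.} Since $a(1-2^{-k})\ge0$, the truncation $u_{k,a}(t)$ lies in $H_0^1(D)$ whenever $u(t)$ does. No spatial cutoff is needed: the Dirichlet condition kills the boundary terms. We apply It\^o's formula to $\|(u-\ell)_+\|_{L^2(D)}^2$ with $\ell=a(1-2^{-k})$. This is justified for variational solutions through a $C^2$ approximation of $r\mapsto r_+^2$, as in the Krylov/Pardoux theory.

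We then multiply by a time cutoff, or equivalently average the starting time $s$ over $[(1-2^{-k+1})T_0,(1-2^{-k})T_0]$. This yields, for $t\in I_k$,
\begin{align*}
&\sup_{t\in I_k}\|u_{k,a}(t)\|_{L^2}^2+\lambda\int_{I_k}\|\nabla u_{k,a}\|_{L^2}^2\,dt\\
&\qquad\le \frac{C2^k}{T_0}\int_{I_{k-1}}\|u_{k,a}\|_{L^2}^2\,dt+C\int_{I_{k-1}}\!\!\int_D(1+u_{k,a}^2)\1_{\{u_{k,a}>0\}}\,dx\,dt+CX_{k-1,a}^{*,D}.
\end{align*}
Each term is handled as follows.
\begin{itemize}
\item The drift term $\int b\cdot\nabla u\,u_{k,a}=\int b\cdot\nabla u_{k,a}\,u_{k,a}$ is absorbed by Young's inequality: $B|\nabla u_{k,a}||u_{k,a}|\le\frac\lambda4|\nabla u_{k,a}|^2+C B^2u_{k,a}^2$.
\item On $\{u>\ell\}$ we have $|u|\le u_{k,a}+\ell$ with $\ell\le a$. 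Hence the $f$-term and the It\^o correction $\|g\|_{\ell^2}^2\1_{\{u>\ell\}}$ are bounded by $C(1+a^2+u_{k,a}^2)\1_{\{u_{k,a}>0\}}$, using $\|K\|_\infty\le1$.
\item The martingale part is bounded by $X_{k-1,a}^{*,D}$. Here $u_{k,a}\le u_{k,a}'$ for the truncation at level $k$ versus the index $j+1=k$, consistent with the indexing in the definition.
\end{itemize}

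\emph{Step 2: interpolation and measure bound.} On the set where $u_{k,a}>0$ we have $u_{k-1,a}\ge a2^{-k}$. Chebyshev's inequality therefore gives
\[
|\{u_{k,a}(t)>0\}|\le (2^k/a)^2\|u_{k-1,a}(t)\|_{L^2}^2,
\]
so that, after H\"older in time,
\[
\int_{I_{k-1}}|\{u_{k,a}>0\}|^{2}dt\lesssim (2^k/a)^4 (U_{k-1,a}^D)^2 .
\]
The $a^2$ factor from Step 1 is offset by the same Chebyshev bound, since $a^2|\{u_{k,a}>0\}|\le 4^k\|u_{k-1,a}\|^2$. Consequently the right side of the energy inequality is at most $C^k(U_{k-1,a}^D+X_{k-1,a}^{*,D})$, after bounding the $L^1_t$ terms on the finite interval by the $L^2_t$ norm, i.e.\ by $U$.

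For $n\ge3$, Sobolev embedding $H_0^1(D)\subset L^{2n/(n-2)}$, valid on any bounded domain for $H^1_0$ functions by extension by zero, is combined with $L^\infty_tL^2_x$ through parabolic interpolation. Together with H\"older's inequality with the measure factor, this gives
\[
\|u_{k,a}\|_{L^4_tL^2_x}^2\le C\,\mathcal E_k\,\sup_t|\{u_{k,a}(t)>0\}|^{\theta}
\]
in a time-integrated form. Here $\mathcal E_k$ is the energy quantity of Step 1, and $\theta$ is chosen so that the surplus exponent is exactly $\frac1{n+1}$. In dimensions $n=1,2$, the embedding into $L^q$ holds for all $q<\infty$, which produces any $\mu<1/3$ in place of $1/(n+1)$.

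\emph{Step 3: matching exponents.} We insert the Chebyshev bound to convert the measure factor into $a^{-2/(n+1)}(U_{k-1,a}^D)^{1/(n+1)}$, with geometric constants absorbed into $C^k$.

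We expect Step 3 to be the main obstacle. The crux is the exponent bookkeeping: $U$ is an $L^4_tL^2_x$ quantity, so the measure of $\{u_{k,a}>0\}$ must be controlled in a mixed time-space norm. The plan is to interpolate first in space pointwise in $t$, $\|w\|_{L^2}\le\|w\|_{L^{2^*}}|\{w>0\}|^{1/n}$, and then use H\"older in time with exponents chosen so that the $L^2_tH^1$ and $L^\infty_tL^2$ energy pieces appear linearly while the measure term appears as a power of $\int\|u_{k-1,a}\|^4dt$. If the exponents do not close to exactly $1/(n+1)$, we would adjust the H\"older split; the stated $1/(n+1)$ and the restriction $\mu<1/3$ for $n\le2$ indicate that this split is the intended one.
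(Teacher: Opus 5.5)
Your overall route is the paper's. You apply It\^o's formula to $\|u_{k,a}\|_{L^2(D)}^2$ with no spatial cutoff, since $u_{k,a}(t)\in H_0^1(D)$. You absorb the drift by Young's inequality and choose the starting time in $I_{k-1}\setminus I_k$ by averaging. You use Chebyshev on $\{u_{k,a}>0\}\subset\{u_{k-1,a}>2^{-k}a\}$ to reduce the energy to $C^k(U_{k-1,a}^{D}+X_{k-1,a}^{*,D})$, and then Sobolev plus parabolic interpolation. Your Step~1 and the Chebyshev bookkeeping there are correct.

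The gap is that the step you postpone is the whole quantitative content of the proposition, and the intermediate form in your Step~2 cannot deliver it. If the level-set measure enters as $\sup_t|\{u_{k,a}(t)>0\}|^\theta$, Chebyshev turns it into a power of $\sup_t\|u_{k-1,a}(t)\|_{L^2(D)}^2$. That is an $L^\infty_tL^2_x$ quantity, and it is not controlled by $U_{k-1,a}^{D}$, which is an $L^4_tL^2_x$ norm. The measure has to stay inside the time integral.

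The paper does this as follows. H\"older in $x$ gives $\|u_{k,a}(t)\|_{L^2}^2\le\|u_{k,a}(t)\|_{L^{2(n+1)/n}}^2\,|\{u_{k,a}(t)>0\}|^{1/(n+1)}$. One squares this, integrates over $I_k$, and applies H\"older in $t$ with exponents $\frac{n+1}{n}$ and $n+1$, together with Chebyshev. This yields
\[
U_{k,a}^{D}\le\Bigl(\frac{2^k}{a}\Bigr)^{2/(n+1)}\|u_{k,a}\|^2_{L^{4(n+1)/n}(I_k;L^{2(n+1)/n}(D))}\bigl(U_{k-1,a}^{D}\bigr)^{1/(n+1)} .
\]
The mixed norm is then interpolated between $L^\infty_tL^2_x$ and $L^2_tL^{2n/(n-2)}_x$ with $\theta=\frac{n}{2(n+1)}$, so that it is bounded by the energy.

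Your own formulation also closes exactly, with no adjustment of the split. Apply $\|w\|_{L^2}\le\|w\|_{L^{2^*}}|\{w>0\}|^{1/n}$ only to the power $\gamma=\frac{2n}{n+1}$ of $\|w\|_{L^2}$. Keep $\sup_t\|w\|_{L^2}^{4-\gamma}$ outside the integral, and use H\"older in time with exponents $2/\gamma=\frac{n+1}{n}$ and $n+1$. The energy then appears with total power $2$ in $(U_{k,a}^{D})^2$, and the measure appears exactly as $(2^k/a)^{4/(n+1)}(U_{k-1,a}^{D})^{2/(n+1)}$.

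For $n=1,2$, the same computation with $L^q$ ($2<q<\infty$) in place of $L^{2^*}$ uses $\gamma=\frac{4q}{3q-2}$. It gives the exponent $\mu=\frac{q-2}{3q-2}$ on $U_{k-1,a}^D$ and $2\mu$ on $a$, and $\mu$ sweeps out $(0,1/3)$ as $q$ varies.
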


\begin{proof}
By H\"older's inequality,
\begin{align}
\norm{u_{k,a}(t)}_{L^2(D)}^2
&\leq
\norm{u_{k,a}(t)}_{L^{2(n+1)/n}(D)}^2
|\{x\in D:u_{k,a}(t,x)>0\}|^{1/(n+1)} .
\label{eq:holder-level}
\end{align}
Since
$$
\{u_{k,a}>0\}
\subset
\{u_{k-1,a}>2^{-k}a\},
$$
Chebyshev's inequality gives
$$
|\{u_{k,a}(t)>0\}|
\leq
\left(\frac{2^k}{a}\right)^2
\norm{u_{k-1,a}(t)}_{L^2(D)}^2 .
$$
Squaring \eqref{eq:holder-level}, integrating over $I_k$, and applying H\"older's inequality in time yield
\begin{align}
U_{k,a}^{D}
&\leq
\left(\frac{2^k}{a}\right)^{2/(n+1)}
\norm{u_{k,a}}_{L^{4(n+1)/n}(I_k;L^{2(n+1)/n}(D))}^2
\left(U_{k-1,a}^{D}\right)^{1/(n+1)} .
\label{eq:pre-interpolation}
\end{align}
Assume first that $n\geq3$, and set
\[
q:=\frac{2n}{n-2},
\qquad
r:=\frac{4(n+1)}{n},
\qquad
s:=\frac{2(n+1)}{n},
\qquad
\theta:=\frac{n}{2(n+1)}.
\]
Since $u_{k,a}(t,\cdot)\in H_0^1(D)$ for almost every
$t\in I_k$, the Sobolev inequality implies
\begin{equation*}
\|u_{k,a}\|_{L^2(I_k;L^q(D))}
\leq
C\|\nabla u_{k,a}\|_{L^2(I_k\times D)}.
\end{equation*}
Indeed, for almost every $t$, one may extend
$u_{k,a}(t,\cdot)$ by setting $u_{k,a}(t,\cdot)=0$ outside $D$, and apply the Sobolev inequality on
$\mathbb R^n$.

Moreover, the choice of $\theta$ gives
\[
\frac{1}{s}
=
\frac{1-\theta}{2}+\frac{\theta}{q},
\qquad
\frac{1}{r}=\frac{\theta}{2}.
\]
Therefore, interpolation between
$L^\infty(I_k;L^2(D))$ and $L^2(I_k;L^q(D))$ yields
\begin{align}
\|u_{k,a}\|_{L^r(I_k;L^s(D))}
&\leq
C
\|u_{k,a}\|_{L^\infty(I_k;L^2(D))}^{1-\theta}
\|u_{k,a}\|_{L^2(I_k;L^q(D))}^{\theta}
\nonumber\\
&\leq
C
\|u_{k,a}\|_{L^\infty(I_k;L^2(D))}^{1-\theta}
\|\nabla u_{k,a}\|_{L^2(I_k\times D)}^{\theta}.
\label{eq:mixed-norm-interpolation}
\end{align}
Squaring \eqref{eq:mixed-norm-interpolation} and applying the weighted
Young inequality, we obtain
\begin{align}
\|u_{k,a}\|_{L^{\frac{4(n+1)}{n}}
 (I_k;L^{\frac{2(n+1)}{n}}(D))}^2
&\leq
C\left[
\sup_{t\in I_k}
\|u_{k,a}(t)\|_{L^2(D)}^2
+
\int_{I_k}
\|\nabla u_{k,a}(t)\|_{L^2(D)}^2\,dt
\right].
\label{eq:parabolic-interpolation}
\end{align}

We now estimate the two terms on the right-hand side of
\eqref{eq:parabolic-interpolation}. Set
\[
\Psi_{k,a}(r)=|(r-a(1-2^{-k}))_+|^2,
\]
The function $\Psi_{k,a}$ is of class $C^1$, while its second derivative has a jump discontinuity at $r=a(1-2^{-k})$. The application of It\^o's formula below is therefore understood through a standard regularization argument: we approximate $\Psi_{k,a}$ by smooth convex functions $\Psi_{k,a}^{\varepsilon}$ and regularize the weak equation in the spatial variable by an approximation of the identity. Applying It\^o's formula to the regularized equation and then letting first the spatial regularization parameter and subsequently $\varepsilon$ tend to zero, as in \cite[Remark~2.3]{HsuWangWang2017}, gives
\begin{align}
d\norm{u_{k,a}(t)}_{L^2(D)}^2
&=
-2\ip{A\nabla u_{k,a}}{\nabla u_{k,a}}\,dt
+
2\ip{g_i(u)}{u_{k,a}}\,dw_t^i
\nonumber\\
&\quad
+
2\int_D
u_{k,a}\,
b(t,x,u)\cdot\nabla u_{k,a}\,dx\,dt
\nonumber\\
&\quad
+
\int_D
\left(
\norm{g(u)}_{\ell^2}^2
+
2u_{k,a}f(u)
\right)
\1_{\{u_{k,a}>0\}}\,dx\,dt .
\label{eq:ito-truncation}
\end{align}
By the uniform ellipticity assumption of $A$, we have 
$$
\ip{A\nabla u_{k,a}}{\nabla u_{k,a}}
\geq
\lambda\norm{\nabla u_{k,a}}_{L^2(D)}^2 .
$$
On the set $\{u_{k,a}>0\}$, one has
$$
1\leq a\leq 2^k u_{k-1,a},
\qquad
0<u\leq u_{k-1,a}+a\leq (1+2^k)u_{k-1,a}.
$$
By Young's inequality and the boundedness of $b$, we have
$$
2\left|\int_D u_{k,a}\,b(t,x,u)\cdot\nabla u_{k,a}\,dx\right|
\leq
\frac{\lambda}{2}\norm{\nabla u_{k,a}}_{L^2(D)}^2
+
C B^2\norm{u_{k,a}}_{L^2(D)}^2 .
$$
Using the linear growth condition and the fact that $u_{k,a}\leq u_{k-1,a}$, and
$\norm{K}_{L^\infty(D)}\leq1$, the last term on the right of \eqref{eq:ito-truncation} is bounded by
$$
C^k\norm{u_{k-1,a}(t)}_{L^2(D)}^2\,dt .
$$
Integrating \eqref{eq:ito-truncation} from $t_0\in I_{k-1}\setminus I_k$ to $t\in I_k$, and then taking the supremum over $t\in I_k$, yields
\begin{align}
\sup_{t\in I_k}\norm{u_{k,a}(t)}_{L^2(D)}^2
&+
\int_{I_k}\norm{\nabla u_{k,a}(t)}_{L^2(D)}^2\,dt\nonumber\\
&\leq
C\norm{u_{k,a}(t_0)}_{L^2(D)}^2
+
C^k U_{k-1,a}^{D}
+
C X_{k-1,a}^{*,D}.
\label{eq:energy-after-ito}
\end{align}
By averaging over $I_{k-1}\setminus I_k$, and using $u_{k,a}\leq u_{k-1,a}$, we may choose $t_0$ such that
$$
\norm{u_{k,a}(t_0)}_{L^2(D)}^2
\leq
C^k U_{k-1,a}^{D}.
$$
Combining this estimate with \eqref{eq:pre-interpolation}, \eqref{eq:parabolic-interpolation}, and \eqref{eq:energy-after-ito} yields \eqref{eq:degiorgi-iteration}.
\end{proof}
\begin{remark}
When $n=1$ or $n=2$, the critical Sobolev embedding used above is not
available in the form
\[
H_0^1(D)\hookrightarrow L^{\frac{2n}{n-2}}(D).
\]
Instead, one uses the embedding
\[
H_0^1(D)\hookrightarrow L^q(D)
\]
for any finite $q$ (and, in dimension one, also
$H_0^1(D)\hookrightarrow L^\infty(D)$). Consequently, for every
$\mu\in(0,1/3)$, there exists a constant
$C=C(n,\lambda,\Lambda,B,T_0,D,\mu)$ such that
\begin{equation}
U_{k,a}^{D}
\leq
\frac{C^k}{a^{2\mu}}
\left(
U_{k-1,a}^{D}+X_{k-1,a}^{*,D}
\right)
\left(U_{k-1,a}^{D}\right)^\mu .
\label{eq:degiorgi-iteration-low-dimensional}
\end{equation}
The subsequent tail argument only requires that the exponent of
$U_{k-1,a}^{D}$ be strictly positive. Hence the proof proceeds in the same
way as for $n=1$ and $n=2$.
\end{remark}

Proposition~\ref{prop:degiorgi-step} provides the recursive estimate required for the iteration. Straightforwardly adapting the argument of \cite[Section~3]{HsuWangWang2017} to the present setting yields the following result.

\begin{proposition}
\label{prop:hww-tail}
Assume $T_0=1$ and $\norm{K}_{L^\infty(D)}\leq1$. There are constants
$M_0=M_0(n,\lambda,\Lambda,B,D)>0$ and $\delta>0$ such that for all $a\geq1$ and $M>M_0$,
\begin{align}
\PP\left(
\norm{u^+}_{L^\infty([1,2]\times D)}>a,\,
M\norm{u^+}_{L^4(0,2;L^2(D))}\leq a
\right)
&\leq
\exp(-M^\delta).
\label{eq:hww-tail-dirichlet}
\end{align}
For $n\geq3$, one may take $\delta=1/(n+1)$. For $n=1,2$, $\delta$ may be any number in $(0,1/3)$.
\end{proposition}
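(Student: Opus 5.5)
We will run the stochastic De Giorgi scheme of \cite[Section~3]{HsuWangWang2017}, using the recursion of Proposition~\ref{prop:degiorgi-step} (or \eqref{eq:degiorgi-iteration-low-dimensional} when $n=1,2$) with $T_0=1$. We write $\delta=1/(n+1)$ for $n\ge3$ and $\delta=\mu$ for $n=1,2$. Set $U_k:=U_{k,a}^D$ and $X_k:=X_{k,a}^{*,D}$. The recursion reads
\[
U_k\le \frac{C^k}{a^{2\delta}}\,(U_{k-1}+X_{k-1})\,U_{k-1}^{\delta}.
\]
On the event $\{M\|u^+\|_{L^4(0,2;L^2(D))}\le a\}$ we have $U_0\le a^2/M^2$. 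The deterministic part of the argument is standard. Suppose that for every $k$ the martingale term obeys
\[
X_{k-1}\le M^{\delta}\,\bigl(\text{controlled by } U_{k-1}\bigr).
\]
Then induction shows $U_k\le a^2 M^{-2}\eta^{k}$ for some fixed $\eta<1$, provided $M>M_0$. Indeed, the factor $U_{k-1}^\delta\le a^{2\delta}M^{-2\delta}\eta^{\delta(k-1)}$ cancels $a^{-2\delta}$, and the geometric decay beats $C^k$ once $\eta$ is chosen with $C\eta^{\delta}<\eta$, after adjusting constants. Hence $U_k\to0$. Since $u_{k,a}\to(u-a)_+$ on $I_\infty=[1,2]$, this gives $u\le a$ on $[1,2]\times D$. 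The claimed probability is therefore bounded by $\PP\bigl(\exists k:\ X_{k-1}>M^{\delta}\,\Theta_{k-1}\bigr)$, where $\Theta_{k-1}$ is a suitable power of $U_{k-1}$ that makes the induction close.

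The core is the tail bound on the martingale terms. The quadratic variation of $\int\ip{g_i(u)}{u_{k,a}}\,dw^i$ on $I_{k-1}$ is
\[
\int_{I_{k-1}}\sum_i\ip{g_i(u)}{u_{k,a}}^2\,d\tau\le \int_{I_{k-1}}\Bigl(\int_D (K+\Lambda|u|)\,u_{k,a}\,dx\Bigr)^2 d\tau .
\]
On $\{u_{k,a}>0\}$ we have $K+\Lambda u\le (1+\Lambda)(1+2^k)u_{k-1,a}$, using $\|K\|_{L^\infty}\le1\le a$. Together with $u_{k,a}\le u_{k-1,a}$, this bounds the quadratic variation by $C^k\int_{I_{k-1}}\|u_{k-1,a}\|_{L^2}^4\,d\tau=C^kU_{k-1}^2$. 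For a continuous martingale with $\langle N\rangle\le \sigma^2$, the exponential martingale inequality together with Doob's inequality (applied to the two-parameter supremum via $\sup_{s\le t}|N_t-N_s|\le 2\sup|N|$) gives
\[
\PP\bigl(\sup|N|>\lambda,\ \langle N\rangle\le\sigma^2\bigr)\le 2e^{-\lambda^2/(2\sigma^2)}.
\]

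Choosing the thresholds as $\lambda_k = M^{\delta}(C')^{k}U_{k-1}$-type levels, possibly with an extra factor $2^{k}$, yields per-step bounds of the form $\exp(-c\,4^{k}M^{2\delta}/C^k)$. Summing over $k$ gives at most $\exp(-M^\delta)$ for $M$ large. The comparison must be made on the event where $U_{k-1}$ is already under control; we will use a stopping-time version with deterministic thresholds $\sigma_k^2=C^k(a^2M^{-2}\eta^{k-1})^2$. Because the thresholds are deterministic, the exponential inequality applies directly, with $\lambda_k=M^{\delta/2}\sigma_k 2^{k}$ or similar.

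The main obstacle is balancing these exponents. The martingale threshold, multiplied by $C^k a^{-2\delta}U_{k-1}^\delta$, must still produce $a^2M^{-2}\eta^k$. That is, we need
\[
M^{\delta/2}\,C^{k}\sigma_k\cdot a^{-2\delta}\bigl(a^2M^{-2}\eta^{k-1}\bigr)^{\delta}\lesssim a^2M^{-2}\eta^{k},
\]
which holds because $\sigma_k\sim C^{k/2}a^2M^{-2}\eta^{k-1}$ and $M^{\delta/2-2\delta}$ is small. At the same time the sum of the Gaussian tails $\exp(-M^{\delta}4^k/2)$ must be at most $\exp(-M^{\delta})$, after renaming $\delta$ if needed. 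We will choose $\eta$ first, depending only on $C$ and $\delta$, then $M_0$, and verify both requirements. This bookkeeping follows \cite{HsuWangWang2017}, with the Dirichlet setting entering only through Proposition~\ref{prop:degiorgi-step}.
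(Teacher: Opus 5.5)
Your proposal is correct and follows the paper's route: the paper only says the result follows by adapting \cite[Section~3]{HsuWangWang2017}, and your sketch (the pathwise recursion from Proposition~\ref{prop:degiorgi-step}, the bound $\langle N\rangle\le C^kU_{k-1}^2$, deterministic thresholds on the events $\{U_{k-1}\le a^2M^{-2}\eta^{k-1}\}$, and the exponential martingale inequality summed over $k$) is that adaptation made explicit. Two bookkeeping points need fixing: the condition on $\eta$ should be of the form $2C^{3/2}\eta^{\delta}\le 1$ (your ``$C\eta^{\delta}<\eta$'' cannot hold for $\eta<1$, and the leftover factor $\eta^{-1-\delta}$ is absorbed by taking $M_0$ large), and taking $\lambda_k=M^{\delta}2^{k}\sigma_k$ instead of $M^{\delta/2}2^{k}\sigma_k$ makes the tails $2\exp(-M^{2\delta}4^{k}/8)$, which gives the stated exponent $\delta$ without any renaming.
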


\begin{proposition}
\label{prop:linfty}
For every $p>0$ and $T_0>0$, there exists
$C=C(n,\lambda,\Lambda,B,T_0,p,D)$ such that
\begin{align*}
\E\norm{u}_{L^\infty([T_0,2T_0]\times D)}^p
&\leq
C
\left(
\norm{u_0}_{L^2(D)}
+
\norm{K}_{L^\infty(D)}
\right)^p .
\end{align*}
\end{proposition}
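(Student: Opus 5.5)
The plan is to derive the $L^\infty$ moment bound from the tail estimate of Proposition~\ref{prop:hww-tail}. Three reductions come first: scaling, linearity in the data, and the sign of $u$.

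\emph{Step 1 (normalizations).} By the time rescaling $t\mapsto T_0 t$ we may assume $T_0=1$. This changes $A$, $b$, $f$, $g$ by factors depending only on $T_0$, so the structural constants in Assumption~\ref{ass:fixed-domain-coefficients} change only in a $T_0$-dependent way. Next set $N:=\norm{u_0}_{L^2(D)}+\norm{K}_{L^\infty(D)}$; if $N=0$, uniqueness gives $u\equiv0$. Otherwise $v:=u/N$ solves an equation of the same type, with coefficients $b(t,x,Nr)$, $f(t,x,Nr)/N$, $g(t,x,Nr)/N$. These satisfy the growth bound with $K/N$, which has sup norm at most $1$, with the same $\Lambda$ and $B$, and $\norm{v_0}_{L^2}\le1$. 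It therefore suffices to prove $\E\norm{v}_{L^\infty([1,2]\times D)}^p\le C$. Finally, $-v$ solves an equation of the same form, so it is enough to bound $\E\norm{v^+}_{L^\infty}^p$.

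\emph{Step 2 (energy moments).} Apply It\^o's formula to $\norm{v(t)}_{L^2}^2$. Use ellipticity, absorb the drift term $2\ip{b\cdot\nabla v}{v}\le \frac{\lambda}{2}\norm{\nabla v}^2+CB^2\norm{v}^2$, use the linear growth with $\norm{K}_\infty\le1$, and apply the BDG inequality for the martingale term. Gronwall then gives, for every $q\ge1$,
\[
\E\sup_{t\le 2}\norm{v(t)}_{L^2(D)}^{q}\le C_q\bigl(1+\norm{v_0}_{L^2}\bigr)^{q}\le C_q .
\]
Consequently $Z:=\norm{v^+}_{L^4(0,2;L^2(D))}$ has all moments bounded by constants depending on $(n,\lambda,\Lambda,B,D,q)$, and Chebyshev gives $\PP(Z>s)\le C_q s^{-q}$.

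\emph{Step 3 (tail integration).} For $a\ge1$ and $M>M_0$,
\[
\PP\bigl(\norm{v^+}_{L^\infty([1,2]\times D)}>a\bigr)\le \PP\bigl(MZ>a\bigr)+\exp(-M^\delta)\le C_q M^q a^{-q}+\exp(-M^\delta).
\]
We couple $M$ to $a$ by taking $M=a^{1/2}$ once $a^{1/2}>M_0$, and then $q=2p+2$. This gives $\PP(\norm{v^+}_\infty>a)\le C a^{-p-1}+\exp(-a^{\delta/2})$. Integrating $p\,a^{p-1}\PP(\cdot>a)$ over $a\ge1$, and bounding the range $a\le \max(1,M_0^2)$ trivially, yields $\E\norm{v^+}_{L^\infty}^p\le C$. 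Undoing the normalization produces the factor $N^p$, which is the claimed estimate.

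\emph{Main obstacle.} The main difficulty is keeping the constants in Proposition~\ref{prop:hww-tail} uniform under the rescaling. Dividing by $N$ alters the nonlinearities but preserves exactly the hypotheses used there: the bound $\norm{K}_{L^\infty}\le1$ and the same $\Lambda$ and $B$. So $M_0$ and $\delta$ do not depend on $N$; this is why the reduction in Step 1 is stated carefully. The time scaling to $T_0=1$ is the other point to check, since it affects the constants only through $T_0$.
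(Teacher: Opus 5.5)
Your proposal is correct and follows essentially the same route as the paper: reduce to $T_0=1$ and $\norm{u_0}_{L^2(D)}+\norm{K}_{L^\infty(D)}\le 1$ by scaling, get all moments of $\norm{u}_{L^4(0,2;L^2(D))}$ from the energy identity, and integrate the tail bound of Proposition~\ref{prop:hww-tail} (applied to $u$ and $-u$) with $M=\sqrt a$. The only difference is the moment step, where you use It\^o, BDG and Gronwall instead of the paper's stochastic-exponential representation of $\norm{u(t)}_{L^2(D)}^2+1$ (bounded $F$, $\langle G\rangle_t\le Ct$, Novikov); this makes no difference to the argument, and your appeal to uniqueness when $N=0$ is also unnecessary, since your energy estimate already forces $u\equiv0$.
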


\begin{proof}
By scaling it suffices to consider the case
$
T_0=1,\,
\norm{u_0}_{L^2(D)}+\norm{K}_{L^\infty(D)}\leq1.
$
Set
$$
\Phi(t)=\norm{u(t)}_{L^2(D)}^2+1.
$$
By It\^o's formula, we have
\begin{align}\label{ito}
d\Phi(t)
&=
\Phi(t)\bigl(F(t)\,dt+dG_t\bigr),
\end{align}
where
\begin{align*}
F(t)
&=
\frac{
-2\ip{A\nabla u}{\nabla u}
+
2\ip{b(u)\cdot\nabla u}{u}
+
2\ip{f(u)}{u}
+
\norm{g(u)}_{L^2(D;\ell^2)}^2
}{
\norm{u}_{L^2(D)}^2+1
},
\\
G_t
&=
\int_0^t
\frac{
2\ip{g_i(u(s))}{u(s)}
}{
\norm{u(s)}_{L^2(D)}^2+1
}
\,dw_s^i .
\end{align*}
The solution of \eqref{ito} is given by
\begin{equation*}
\Phi(t)=\Phi(0)
\exp\left(
\int_0^t F(s)\,ds+G_t-\frac12\langle G\rangle_t
\right).
\end{equation*}
By the assumptions and the fact that
$$
|\ip{b(u)\cdot\nabla u}{u}|
\leq
\frac{\lambda}{2}\norm{\nabla u}_{L^2(D)}^2
+
C B^2\norm{u}_{L^2(D)}^2
$$
we obtain $F(t)\leq C$ and
$\langle G\rangle_t\leq C t$ on $[0,2]$. 
Since $\langle G\rangle_t\leq Ct$ on $[0,2]$, Novikov's condition holds, which implies
the exponential process
$$
\exp\left(
rG_{t}-\frac{r^2}{2}\langle G\rangle_{t}
\right)
$$
is a martingale. Note that $F\leq C$ and
$\langle G\rangle_t\leq Ct$, we have
\begin{align*}
\E\Phi(t)^r
&\leq
\Phi(0)^r 
\E\left[
\exp\left(
r\int_0^tF(s)\,ds+rG_t-\frac r2\langle G\rangle_t
\right)
\right]
\leq
C_r\Phi(0)^r ,
\end{align*}
uniformly for $t\in[0,2]$. Taking $r=q/2$ and using
$\|u(t)\|_{L^2(D)}^q\leq \Phi(t)^{q/2}$ gives
\begin{align}
\E\int_0^2\norm{u(t)}_{L^2(D)}^q\,dt
&\leq
C_q .
\label{eq:l2-moment}
\end{align}
Set
$$
X=\norm{u}_{L^\infty([1,2]\times D)},
\qquad
Y=\norm{u}_{L^4(0,2;L^2(D))}.
$$
Applying Proposition \ref{prop:hww-tail} to $u$ and to $-u$ gives
\begin{align*}
\PP(X>a,\;Y\leq a/M)
&\leq
2e^{-M^\delta}
\end{align*}
for all $a\geq1$ and $M\geq M_0$. Taking $M=\sqrt a$ for large $a$, and using \eqref{eq:l2-moment} to control moments of $Y$, one obtains
\begin{align*}
\E X^p
&=
p\int_0^\infty a^{p-1}\PP(X>a)\,da
\nonumber\\
&\leq
C
+
p\int_{M_0^2}^{\infty}
a^{p-1}\PP(Y>\sqrt a)\,da
+
2p\int_{M_0^2}^{\infty}
a^{p-1}\PP(X>a, Y\le \sqrt{a})\,da
\nonumber\\
&\leq
C
+
\E Y^{2p}
+
2p\int_{M_0^2}^{\infty}
a^{p-1}e^{-a^{\delta/2}}\,da
\leq
C_p .
\end{align*}
Thus, we finish the proof.
\end{proof}

The last step uses two standard boundary estimates. The first estimate is quoted from \cite[Theorem 2.7]{Kim2004}.
\begin{lemma}
\label{lem:dirichlet-heat}
Let $D\subset\R^n$ be bounded and $C^{1,1}$, and let $p>n+2$. Let $v$ solve

\begin{equation*}
\left\{
\begin{aligned}
&dv
=
\Delta_D v\,dt+h_i\,dw_t^i,
&& (t,x)\in(S,2T_0]\times D,
\\
&v
=0,
&& (t,x)\in(S,2T_0]\times\partial D,
\\
&v(S)
=0,
&& x\in D.
\end{aligned}
\right.
\end{equation*}
If $h$ is predictable and
$$
h\in L^p\bigl(\Omega\times(S,2T_0]\times D;\ell^2\bigr),
$$
then there are $\alpha_1\in(0,1)$ and $C>0$, depending only on $n,p,T_0,D$, such that
\begin{align*}
\E\|v\|_{C^{\alpha_1/2,\alpha_1}([T_0,2T_0]\times\overline D)}^p
+
\E\|\nabla v\|_{L^p((T_0,2T_0]\times D)}^p
&\leq
C\E\|h\|_{L^p((S,2T_0]\times D;\ell^2)}^p .
\end{align*}
Moreover,
\begin{align*}
\E\|D^2v\|_{L^p((T_0,2T_0];W^{-1,p}(D))}^p
&\leq
C\E\|h\|_{L^p((S,2T_0]\times D;\ell^2)}^p .
\end{align*}
\end{lemma}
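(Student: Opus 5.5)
We propose to prove Lemma~\ref{lem:dirichlet-heat} by combining the $L^p$-theory of Kim for stochastic heat equations on $C^{1,1}$ domains with a parabolic Sobolev embedding. In practice the lemma is a specialization of \cite[Theorem~2.7]{Kim2004} to our setting.

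\emph{Step 1: reduce to zero initial time.} Shifting time by $S$, we may assume $S=0$. Since $v(S)=0$, no initial-data term appears. We extend $h$ by zero outside $(S,2T_0]$.

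\emph{Step 2: the $H^1_p$ estimate from Kim's theory.} In Kim's framework, solutions are measured in weighted spaces $\mathfrak H^{\gamma}_{p,\theta}(D,T)$, where the weight is a power of $\rho=\operatorname{dist}(x,\partial D)$. For a $C^{1,1}$ domain and $\theta$ in the admissible range $(n-1,n-1+p)$, one has the a priori estimate
\[
\E\|\rho^{-1}v\|^p_{H^{1}_{p,\theta}}
\le C\,\E\|h\|^p_{L_{p,\theta}(\ell^2)} .
\]
We choose $\theta=n$, which is admissible since $p\ge2$. With this choice the weighted norms on the right are comparable to unweighted ones, because $\rho^{\theta-n}=1$ and $D$ is bounded. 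The same choice controls $\E\|\nabla v\|^p_{L^p}$ and, via Hardy's inequality, the $W^{-1,p}$ bound for $D^2v$ (since $D^2v=\nabla(\nabla v)$).

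\emph{Step 3: H\"older regularity.} Kim's theorem also gives Hölder continuity in time with values in a weighted Sobolev space, of order $\beta/2-1/p$ for $\beta<1$. Combining this with the spatial embedding $H^1_p\subset C^{1-n/p}$, which is valid because $p>n+2$, yields
\[
\E\|v\|^p_{C^{\alpha_1/2,\alpha_1}}\le C\,\E\|h\|^p_{L^p}
\]
for some small $\alpha_1>0$. To obtain regularity up to the boundary, we use that $\rho^{-1}v$ is bounded. This shows that $v$ vanishes at least linearly (in a Hölder sense) near $\partial D$, so the interior Hölder estimates glue with the boundary behaviour. The restriction to $[T_0,2T_0]$ is harmless.

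\textbf{Main obstacle.} The step we expect to be delicate is the bookkeeping of weights: choosing $\theta$ so that the right-hand side becomes the plain $L^p$ norm while the left-hand side still controls Hölder norms uniformly up to $\overline D$. Our first attempt will be $\theta=n$ with the Hardy inequality. If that fails, we will instead use an $\varepsilon$-loss in the exponent.
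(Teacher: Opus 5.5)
The paper gives no argument for this lemma beyond quoting Kim's Theorem~2.7. At the top level your plan, specializing Kim's weighted $L^p$-theory with $\theta=n$, is therefore the paper's, and your Steps~1--2 are sound. Indeed, $\theta=n\in(n-1,n-1+p)$ turns the right-hand side into the plain $L^p$ norm, and $\psi^{-1}v\in\mathbb H^1_{p,n}$ gives $\nabla v=(\nabla\psi)\,\psi^{-1}v+\psi\nabla(\psi^{-1}v)\in L^p$. The $W^{-1,p}$ bound on $D^2v$ is then immediate from $\nabla v\in L^p$; no Hardy inequality is needed.

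Your Step~3, however, rests on a false claim. Kim's estimate gives only $\rho^{-1}v\in L^p$, not boundedness, and $v$ does \emph{not} vanish linearly at $\partial D$ in general. For example, take $h=(1,0,0,\dots)$. The two-sided bound $P_\tau 1(x)\asymp\min\{1,\rho(x)/\sqrt\tau\}$ for the Dirichlet heat semigroup gives $\E|v(t,x)|^2=\int_0^{t-S}|P_\tau 1(x)|^2\,d\tau\asymp\rho(x)^2\log(1/\rho(x))$, so $\rho^{-1}v$ is unbounded.

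The exponent bookkeeping is also off. The time-H\"older statement is $\psi^{\beta-1}v\in C^{\beta'/2-1/p}([0,T];H^{1-\beta}_{p,\theta})$ with $2/p<\beta'<\beta\le 1$. Its values lie in $H^{1-\beta}$, not $H^1$, since $v(t)\in H^1_p$ only for a.e.\ $t$. So $H^1_p\subset C^{1-n/p}$, which needs only $p>n$, is not the relevant embedding.

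The correct route is as follows. Choose $2/p<\beta'<\beta<1-n/p$; this is exactly where $p>n+2$ is used. Put $\alpha=1-\beta-n/p$. Apply Krylov's weighted embedding $|\psi^{\theta/p}w|^{(0)}_{\delta}\le C\|w\|_{H^{\gamma}_{p,\theta}}$, with $\delta=\gamma-n/p$, where $|f|^{(0)}_\delta=\sup|f|+\sup_{x\neq y}\min\{\psi(x),\psi(y)\}^{\delta}|f(x)-f(y)|/|x-y|^{\delta}$. Use it with $\theta=n$ and $\gamma=1-\beta$, applied to $w=\psi^{\beta-1}v(t)$ and to $w=\psi^{\beta-1}(v(t)-v(s))$. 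With $\rho\asymp\psi$, this yields three bounds:
\begin{itemize}
\item $|v(t,x)|\le C\rho(x)^{\alpha}$;
\item $|v(t,x)-v(s,x)|\le C|t-s|^{\beta'/2-1/p}$;
\item $|\rho^{-\alpha}v(t,x)-\rho^{-\alpha}v(t,y)|\le C\min\{\rho(x),\rho(y)\}^{-\alpha}|x-y|^{\alpha}$.
\end{itemize}
Here $C$ is random, with $\E C^p\le C'\E\|h\|^p_{L^p}$. Then glue in two cases. If $|x-y|\le\frac12\min\{\rho(x),\rho(y)\}$, use the last bound together with $|\rho(x)^\alpha-\rho(y)^\alpha|\le|x-y|^\alpha$. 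Otherwise, $|v(t,x)|+|v(t,y)|\le C(\rho(x)^\alpha+\rho(y)^\alpha)\le C|x-y|^\alpha$. This gives the lemma with $\alpha_1=\min\{\alpha,\beta'-2/p\}$. The boundary decay you can actually use is $\rho^{\alpha}$ with $\alpha<1$, not $\rho^{1}$.
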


The second estimate goes back to \cite[Chapter III, Theorem 10.1]{LadyzhenskayaSolonnikovUraltseva1968}. For the case $c=0$, we also refer to \cite[Theorem 3.2]{DebusscheDeMoorHofmanova2015}, where a detailed proof is provided.

\begin{lemma}
\label{lem:boundary-dgn}
Let $D\subset\R^n$ be bounded and $C^{1,1}$. Suppose $z$ solves
\begin{align*}
\partial_t z
&=
\operatorname{div}(A\nabla z)+c\cdot\nabla z+F+\operatorname{div} H
&&\text{in }(S,2T_0]\times D,
\\
z&=0
&&\text{on }(S,2T_0]\times\partial D,
\end{align*}
where $A$ is bounded and uniformly elliptic, and $c\in L^\infty((S,2T_0]\times D;\R^n)$. Assume
$$
z\in L^\infty((S,2T_0]\times D),
\qquad
F\in L^p((S,2T_0]\times D),
\qquad
H\in L^p((S,2T_0]\times D;\R^n)
$$
for some $p>n+2$. Then there are $\alpha_2\in(0,1)$ and $C>0$, depending only on $n,\lambda,p,T_0,D$ and $\|c\|_{L^\infty}$, such that
\begin{align*}
\|z\|_{C^{\alpha_2/2,\alpha_2}([T_0,2T_0]\times\overline D)}
&\leq
C
\left(
\|z\|_{L^\infty((S,2T_0]\times D)}
+
\|F\|_{L^p((S,2T_0]\times D)}
+
\|H\|_{L^p((S,2T_0]\times D)}
\right).
\end{align*}
\end{lemma}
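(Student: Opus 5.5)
This is the classical De Giorgi--Nash--Moser estimate up to the boundary, as in \cite[Chapter III, Theorem 10.1]{LadyzhenskayaSolonnikovUraltseva1968}. I would reprove it along those lines and track the dependence of the constants. Throughout I assume $T_0-S$ is bounded below by a fixed multiple of $T_0$ (for instance $S\le T_0/2$), so that every parabolic cylinder of radius comparable to $\sqrt{T_0}$ centred in $[T_0,2T_0]\times\overline D$ lies inside the time interval $(S,2T_0]$. By linearity and scaling I normalise
\[
\|z\|_{L^\infty}+\|F\|_{L^p}+\|H\|_{L^p}\le 1 ,
\]
and reduce to showing the oscillation decay
\[
\operatorname{osc}_{Q_{\rho}(t_0,x_0)\cap\{x\in D\}} z\le C\rho^{\alpha_2}
\]
for every $(t_0,x_0)\in[T_0,2T_0]\times\overline D$ and $\rho\le\rho_0$. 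Combined with the $L^\infty$ bound, this oscillation decay gives the $C^{\alpha_2/2,\alpha_2}$ norm.

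\textbf{Step 1: Caccioppoli inequalities for truncations.} For levels $k$, test the weak equation with $\eta^2(z-k)_\pm$, where $\eta$ is a cutoff on a cylinder $Q_r$. In the boundary case I only allow $k\ge0$ for $(z-k)_+$ and $k\le 0$ for $(z-k)_-$. Since $z=0$ on $\partial D$, these truncations vanish on $\partial D$, so $\eta^2(z-k)_\pm\in H^1_0(D)$ after extension by zero, and the same Caccioppoli inequality holds in cylinders that meet the boundary. The remaining terms are handled as follows.
\begin{itemize}[label=--]
\item The drift term $c\cdot\nabla z\,(z-k)_\pm\eta^2$ is absorbed by Young's inequality into $\frac\lambda4\|\eta\nabla(z-k)_\pm\|^2$, at the cost of $C\|c\|_\infty^2\|(z-k)_\pm\eta\|^2$.
\item The terms $F$ and $\operatorname{div}H$ are estimated by H\"older's inequality on the level set $A_{k,r}=\{(z-k)_\pm>0\}\cap Q_r$. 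This produces a contribution $C|A_{k,r}|^{1-2/p}$.
\end{itemize}
Since $p>n+2$, this contribution scales like $r^{n+2}\,r^{-2(n+2)/p}$, which is exactly the extra term allowed in LSU's parabolic De Giorgi classes with exponent $\varepsilon=2/(n+2)-2/p>0$. So $z$ belongs to these classes both in the interior and at the boundary.

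\textbf{Step 2: flattening the boundary near $x_0\in\partial D$.} I use a $C^{1,1}$ chart to flatten $\partial D$ near $x_0$. The transformed coefficients remain bounded and uniformly elliptic, with ellipticity depending only on $\lambda$ and $D$. The chart only perturbs $c$ and $H$ by bounded factors, and no second derivatives of the chart enter a divergence-form equation, so $C^{1,1}$ is more than enough here. The domain then satisfies a uniform exterior measure condition,
\[
|B_r(x_0)\setminus D|\ge\theta_0|B_r|,\qquad x_0\in\partial D,\ r\le r_0 ,
\]
and I also use the analogous interior condition near boundary points.

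\textbf{Step 3: oscillation decay, the main obstacle.} The hard part is making the boundary oscillation decay uniform. Take $x_0\in\partial D$ and the level $k=\frac12\sup_{Q_r}z_+$, extending $z_+$ by zero outside $D$. Then $(z-k)_+$ vanishes on a set of measure at least $\theta_0|Q_r|$. De Giorgi's measure lemma, in its parabolic form with shrinking levels, then yields
\[
\sup_{Q_{r/2}}z_+\le(1-\sigma)\sup_{Q_r}z_+ + Cr^{\beta},
\]
with $\sigma=\sigma(n,\lambda,\theta_0,\|c\|_\infty)$. The same argument applied to $z_-$ gives the matching bound below. Interior points are handled by the standard interior oscillation lemma. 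For points at distance $d<\rho$ from $\partial D$, I compare with a boundary cylinder of radius $2d$ centred at the nearest boundary point. Iterating the decay in the usual way gives $\operatorname{osc}_{Q_\rho}z\le C\rho^{\alpha_2}$. The time direction is handled by the parabolic scaling built into the cylinders, which yields the $\alpha_2/2$ H\"older exponent in $t$.
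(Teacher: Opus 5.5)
Your proposal is correct and takes the same route as the paper. The paper gives no proof of its own; it quotes \cite[Chapter III, Theorem 10.1]{LadyzhenskayaSolonnikovUraltseva1968} (and \cite[Theorem 3.2]{DebusscheDeMoorHofmanova2015} for $c=0$), and what you sketch is the boundary De Giorgi argument behind that theorem: Caccioppoli inequalities for $(z-k)_\pm$ with $\pm k\ge0$, the extra term $|A_{k,r}|^{1-2/p}$ placing $z$ in LSU's class with $\varepsilon=\frac{2}{n+2}-\frac2p>0$, and the exterior measure condition driving the boundary decay. Two minor slips: the flattening in Step~2 is unnecessary, since a $C^{1,1}$ domain already satisfies the exterior measure condition; and in the near-boundary case $d<\rho$ the comparison cylinder must have radius $2\rho$, because radius $2d$ is the one you use when $d\ge\rho$, before switching to interior decay.
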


\subsection{Proof of Theorem \ref{thm:main}}

\begin{proof}
Fix $T_0>0$ and set $S=T_0/2$. Let $v$ be the variational solution of the
Dirichlet stochastic heat equation displayed in
Lemma~\ref{lem:dirichlet-heat}:
\begin{equation*}
\left\{
\begin{aligned}
&dv
=
\Delta_D v\,dt
+
g_i(t,x,u(t,x))\,dw_t^i,
&& (t,x)\in(S,2T_0]\times D,
\\
&v
=0,
&& (t,x)\in(S,2T_0]\times\partial D,
\\
&v(S)
=0,
&& x\in D.
\end{aligned}
\right.
\end{equation*}
By the linear growth condition,
$$
\|g(t,\cdot,u(t))\|_{L^p(D;\ell^2)}
\leq
\|K\|_{L^p(D)}+\Lambda\|u(t)\|_{L^p(D)}.
$$
Since $D$ is bounded and Proposition~\ref{prop:linfty} gives $u\in L^p(\Omega;L^\infty([S,2T_0]\times D))$ for every finite $p$,
\begin{align*}
\E\|g(\cdot,\cdot,u)\|_{L^p((S,2T_0]\times D;\ell^2)}^p
&\leq
C
\left(
\norm{u_0}_{L^2(D)}
+
\norm{K}_{L^\infty(D)}
\right)^p .
\end{align*}
By Lemma~\ref{lem:dirichlet-heat}, we have
\begin{align}
&\E\|v\|_{C^{\alpha_1/2,\alpha_1}([T_0,2T_0]\times\overline D)}^p
+
\E\|\nabla v\|_{L^p((T_0,2T_0]\times D)}^p
+
\E\|D^2v\|_{L^p((T_0,2T_0];W^{-1,p}(D))}^p
\nonumber\\
&\qquad\leq
C
\left(
\norm{u_0}_{L^2(D)}
+
\norm{K}_{L^\infty(D)}
\right)^p .
\label{eq:v-estimate}
\end{align}
Set $\phi=u-v$. Then we have
\begin{align}
\partial_t\phi
&=
\operatorname{div}(A\nabla\phi)
+
b(t,x,u)\cdot\nabla\phi
+
f(t,x,u)
+
b(t,x,u)\cdot\nabla v
+
\operatorname{div}((A-I)\nabla v),
\label{eq:phi-equation}
\\
\phi&=0
\qquad\text{on }(S,2T_0]\times\partial D.
\nonumber
\end{align}
For each fixed
$\omega$ outside a null set, \eqref{eq:phi-equation} is a deterministic
divergence-form equation. In Lemma~\ref{lem:boundary-dgn} we take
$$
z=\phi,
\qquad
c=b(\cdot,\cdot,u),
\qquad
F=f(\cdot,\cdot,u)+b(\cdot,\cdot,u)\cdot\nabla v,
\qquad
H=(A-I)\nabla v.
$$
The coefficient $c$ is bounded by assumption. The function $z$ is bounded because $u$ is bounded by Proposition~\ref{prop:linfty} and $v$ is H\"older continuous by \eqref{eq:v-estimate}. Moreover,
$$
|f(t,x,u)|\leq K(x)+\Lambda |u(t,x)|,
\qquad
|b(t,x,u)\cdot\nabla v|\leq B|\nabla v|,
$$
and therefore
\begin{align}
\E\|F\|_{L^p((S,2T_0]\times D)}^p
&\leq
C
\left(
\norm{u_0}_{L^2(D)}
+
\norm{K}_{L^\infty(D)}
\right)^p.
\label{eq:f-estimate}
\end{align}
Since $A$ is bounded,
\begin{equation}
\|H\|_{L^p((S,2T_0]\times D)}
\leq C\|\nabla v\|_{L^p((S,2T_0]\times D)}.
\label{eq:H-estimate}
\end{equation}
Using Lemma~\ref{lem:boundary-dgn}, \eqref{eq:v-estimate}, \eqref{eq:f-estimate}, \eqref{eq:H-estimate}, and Proposition~\ref{prop:linfty}, we get
\begin{align}
\E\|\phi\|_{C^{\alpha_2/2,\alpha_2}([T_0,2T_0]\times\overline D)}^p
&\leq
C
\left(
\norm{u_0}_{L^2(D)}
+
\norm{K}_{L^\infty(D)}
\right)^p .
\label{eq:phi-holder}
\end{align}
Recall that $u=\phi+v$. With $\alpha=\min\{\alpha_1,\alpha_2\}$, \eqref{eq:v-estimate} and \eqref{eq:phi-holder} imply \eqref{eq:holder-moment}.
\end{proof}

\section{Boundary regularity on moving domains}
\label{sec:moving-regularity}

\begin{proof}[Proof of Theorem~\ref{thm:physical-holder-transfer}]
Existence and uniqueness follow from Theorem~\ref{thm:moving-wellposed}. It
remains to prove the H\"older estimate. Define
\[
\widehat{u}(t,y):=u(t,r(t,y)),
\qquad y\in\OO.
\]
By Section~\ref{sec:solution-equivalence}, $\widehat{u}$ is the variational solution
of \eqref{eq:abstract-fixed-equation}, which is precisely equation \eqref{eq:fixed-spde} on $\OO$. We now verify that its coefficients satisfy
the assumptions of Theorem~\ref{thm:main}.

First we check that $A(t,y)=K_r(t,y)K_r(t,y)^\top$ is uniformly elliptic. Indeed,
$$
a^{ij}(t,y)\xi_i\xi_j=|K_r(t,y)^\top\xi|^2.
$$
Assumption~\ref{ass:moving-domain} and \eqref{eq:K-inverse} imply that both $D_y r$ and $(D_y r)^{-1}$ are uniformly bounded. Hence there are constants $0<c<C<\infty$ such that
$$
c|\xi|^2\leq a^{ij}(t,y)\xi_i\xi_j\leq C|\xi|^2
$$
for all $\xi\in\R^d$, uniformly in $(t,y)$.

The first-order coefficient
$$
q^i
=
\Delta_x\rho_i(t,r(t,y))
-\partial_t\rho_i(t,r(t,y))
-\partial_{y_j}a^{ij}(t,y)
$$
is bounded because of the Assumption~\ref{ass:moving-domain} and the definition of $a^{ij}$. Thus $q\in L^\infty((0,T)\times\OO;\R^d)$.

The growth conditions of $\widetilde f$ and $\widetilde g$ are preserved by the pullback. By \eqref{eq:fg-pullback} and \eqref{eq:moving-growth},
$$
|\widetilde f(t,y,z)|+\|\widetilde g(t,y,z)\|_{\ell^2}
=
|f(t,r(t,y),z)|+\|g(t,r(t,y),z)\|_{\ell^2}
\leq
K_{0}+\Lambda |z|.
$$
The boundary condition is homogeneous on the boundary $\partial\OO$,
and $u_0$ is deterministic in $L^2(\OO)$. Hence Theorem~\ref{thm:main},
applied with $D=\OO$, $n=d$, and $b=q$, gives an exponent
$\alpha\in(0,1)$ and, for every $p>0$,
\begin{equation}
\E\|\widehat{u}\|_{C^{\alpha/2,\alpha}([T_0,2T_0]\times\overline\OO)}^p
\leq
C
\left(
\|u_0\|_{L^2(\OO)}
+K_{0}
\right)^p.
\label{eq:moving-holder-estimate}
\end{equation}

For almost every $\omega$, the path $\widehat{u}(\omega)$ is H\"older continuous on $[T_0,2T_0]\times\overline\OO$. For $(t,x)\in Q_{T_0,2T_0}$, the space-variable representative of the solution is
$$
u(t,x,\omega)=\widehat{u}(t,\rho(t,x),\omega).
$$
Fix two points $(t,x),(s,z)\in Q_{T_0,2T_0}$ and set
$$
y=\rho(t,x),\qquad \zeta=\rho(s,z).
$$
Then $y,\zeta\in\overline\OO$, and
$$
u(t,x)-u(s,z)=\widehat{u}(t,y)-\widehat{u}(s,\zeta).
$$
The H\"older continuity of $\widehat{u}$ gives
\begin{equation}
|u(t,x)-u(s,z)|
\leq
\|\widehat{u}\|_{C^{\alpha/2,\alpha}([T_0,2T_0]\times\overline\OO)}
\left(|t-s|^{\alpha/2}+|y-\zeta|^\alpha\right).
\label{eq:transfer-first}
\end{equation}
It remains to compare the distance $ |y-\zeta| $ with the space distance $ |x-z| $.

Since $x=r(t,y)$ and $z=r(s,\zeta)$, we have
\begin{align}
 |y-\zeta|
&=
|\rho(t,x)-\rho(t,r(t,\zeta))|
\nonumber\\
& \leq
\|D_x\rho(t,\cdot)\|_{L^\infty(\mathcal O_t)}
|x-r(t,\zeta)|
\nonumber\\
&\leq
C\left(|x-z|+|z-r(t,\zeta)|\right)
\nonumber\\
&=
C\left(|x-z|+|r(s,\zeta)-r(t,\zeta)|\right)
\nonumber\\
&\leq
C\bigl(|x-z|+|t-s|\bigr).
\label{eq:reference-distance-bound}
\end{align}
In the last step we used the uniform bound on $\partial_t r$ and the fact that
$$|r(s,\zeta)-r(t,\zeta)|\leq \int_s^t\|\partial_\tau r(\tau,\cdot)\|_{L^\infty(\OO)}\,d\tau\leq C|t-s|.$$
Thus, by \eqref{eq:reference-distance-bound} we have,
$$
|y-\zeta|^\alpha
\leq
C\bigl(|x-z|^\alpha+|t-s|^\alpha\bigr)
\leq
C\bigl(|x-z|^\alpha+|t-s|^{\alpha/2}\bigr),
$$
where the last inequality uses $t,s\in[0,T]$. Substituting this into \eqref{eq:transfer-first} gives
\begin{equation*}
|u(t,x)-u(s,z)|
\leq
C\|\widehat{u}\|_{C^{\alpha/2,\alpha}([T_0,2T_0]\times\overline\OO)}
\left(|t-s|^{\alpha/2}+|x-z|^\alpha\right).
\end{equation*}
Since $\rho(t,\cdot)$ maps $\overline{\mathcal O_t}$ onto
$\overline{\OO}$ for every $t$, we have
$$
\sup_{(t,x)\in Q_{T_0,2T_0}}|u(t,x)|
=
\sup_{(t,y)\in[T_0,2T_0]\times\overline\OO}|\widehat{u}(t,y)|
\leq
\|\widehat{u}\|_{C^{\alpha/2,\alpha}([T_0,2T_0]\times\overline\OO)} .
$$
Therefore, we have
$$
\|u\|_{C^{\alpha/2,\alpha}(Q_{T_0,2T_0})}
\leq
C
\|\widehat{u}\|_{C^{\alpha/2,\alpha}([T_0,2T_0]\times\overline\OO)}
\quad\text{almost surely}.
$$
Taking $p$-th moments and using \eqref{eq:moving-holder-estimate}, we obtain
\eqref{eq:physical-holder-moment}.
\end{proof}

\subsection*{Acknowledgments}
The authors would like to thank Dr Tianyi Pan for insightful discussions and valuable suggestions. This work is partially supported by the National Key R\&D Program of China (No. 2022YFA1006001), the National Natural Science Foundation of China (Nos. 12131019, 12571158) and China Postdoctoral Science Foundation (No. 2026M793409).


\end{document}